\newtheorem{theorem}{Theorem}[section]
\newtheorem{lemma}[theorem]{Lemma}
\newtheorem{corollary}[theorem]{Corollary}
\newtheorem{proposition}[theorem]{Proposition}
\newtheorem{definition}[theorem]{Definition}
\newtheorem{remark}[theorem]{Remark}
\newcommand{\filledbox}{\leavevmode
  \hbox to.77778em{%
  \hfil\vbox to.675em{\hrule width.6em height.6em}\hfil}}
\newcommand{\Rm}{{\mathbb R}}
\begin{document}
\tabulinesep=1.0mm
\title{Asymptotic energy distribution of one-dimensional nonlinear wave equation\footnote{MSC classes: 35L05, 35L71.}}

\author{Ruipeng Shen\\
Centre for Applied Mathematics\\
Tianjin University\\
Tianjin, China}

\maketitle

\begin{abstract}
 In this work we consider the defocusing nonlinear wave equation in one-dimensional space. We show that almost all energy is located near the light cone $|x|=|t|$ as time tends to infinity. We also prove that any light cone will eventually contain some energy. As an application we obtain a result about the asymptotic behaviour of solutions to focusing one-dimensional wave equation with compact-supported initial data. 
\end{abstract}

\section{Introduction}
 In this work we consider the defocusing one-dimensional wave equation ($p>1$)
 \begin{equation} \label{cp1}
  \left\{\begin{array}{ll} u_{tt} - u_{xx} = - |u|^{p-1}u, & (t,x) \in \Rm\times \Rm; \\
  u(x,0) = u_0(x), & x\in \Rm; \\ u_t(x,0) = u_1(x), & x\in \Rm.  \end{array}\right.
 \end{equation}
 Throughout this paper we always assume $(u_0,u_1) \in (\dot{H}^1\cap L^{p+1}) \times L^2$, thus the solution comes with finite energy and momentum, which are the most important conserved quantities of this equation. 
\begin{align*}
 E(u,u_t) & = \int_{-\infty}^\infty \left(\frac{1}{2}|u_x(x,t)|^2 + \frac{1}{2}|u_t(x,t)|^2 + \frac{1}{p+1}|u(x,t)|^{p+1}\right) dx = \hbox{Const};  \\
 M(u,u_t) & = \int_{-\infty}^\infty u_x(x,t) u_t(x,t) dx = \hbox{Const}.
\end{align*}

\paragraph{Local well-posedness} The existence and uniqueness of local solutions follows a similar argument as given in Ginibre-Soffer-Velo \cite{locad1} and Lindblad-Sogge \cite{ls}. For the reader's convenience we stretch a proof as below. First of all, the embedding $\dot{H}^1\cap L^{p+1}(\Rm) \hookrightarrow C(\Rm)$ and a Gagliardo-Nirenberg inequality 
\[
 f(x_0) \leq \left(\int_{x_0}^{+\infty} |f'(x)|^2 dx \right)^{\frac{1}{p+3}} \left(\int_{x_0}^{+\infty} |f(x)|^{p+1}\right)^{\frac{1}{p+3}} 
\]
imply that $u_0 \in C(\Rm_x)$ is bounded and converges to zero as $x\rightarrow \infty$. We define a constant 
\[
 A =  \sup_{(x,t) \in \Rm \times [0,1]} \left| u_0(x-t)+u_0(x+t) + \int_{x-t}^{x+t} u_1(x') dx'\right| < +\infty.
\]
Next we introduce a complete space
\begin{align*}
 &X =  \left\{u\in C(\Rm_x \times [0,T]): \lim_{x\rightarrow \pm \infty} u(x,t) = 0\right\}; & &\|u\|_X = \max_{(x,t)\in \Rm\times [0,T]} |u(x,t)|.&
\end{align*}
Here $T$ is a positive constant to be determined later. We then define a transformation $\mathbf{T}: X \rightarrow X$ by D'Alembert formula
\[
 (\mathbf{T} u)(x,t) = \frac{1}{2}\left[u_0(x-t)+u_0(x+t) +\int_{x-t}^{x+t} u_1(x') dx'\right] - \frac{1}{2} \int_{0}^t \int_{x-t+t'}^{x+t-t'} |u|^{p-1} u(x',t') dx' dt'. 
\]
In addition, if $\|u_1\|_X, \|u_2\|_X \leq A$, then we have 
\begin{align*}
 \|\mathbf{T} u_1\|_X, \|\mathbf{T} u_2\|_X & \leq A/2 + T^2 A^p \\
 \|\mathbf{T} u_1 - \mathbf{T} u_2\| & \leq p T^2 A^{p-1} \|u_1-u_2\|_X. 
\end{align*}
Thus $\mathbf{T}$ is a contraction map from $\{u\in X: \|u\|_X \leq A\}$ to itself if $T$ is sufficiently small. This immediately gives the existence and uniqueness of local solutions. If the local solution $u$ is sufficiently smooth and compactly-supported, a simple calculation immediately gives the energy conservation law. In the general case we need to apply the standard smooth approximation and cut-off techniques. Here we omit the details. 

\paragraph{Global existence} A careful review of the argument above shows that a positive lower bound of the minimal existence time $T$ can be determined solely by the energy $E$. Combining this fact with the energy conservation law, we obtain global solutions $u \in C(\Rm_x \times \Rm_t)$ with $(u(\cdot,t),u_t(\cdot, t)) \in C(\Rm; (\dot{H}^1\cap L^{p+1})\times L^2)$. A Gagliardo-Nirenberg inequality also shows $u(x,t)$ is a bounded function on the whole plane. In addition, if initial data also satisfy $(u_0,u_1) \in C^2(\Rm)\times C^1(\Rm)$, then the solution $u$ is also a $C^2$ function, i.e. a classic solution of the equation.  

\paragraph{Asymptotic behaviour} The asymptotic behaviour of solutions are much difficult to understand. The major reason is that the linear solution in one-dimensional case does not have any decay as $t\rightarrow \infty$. A typical free wave consists of two wave profiles moving to left and right directions at constant speed, i.e. $u(x,t) = f(x-t) + g(x+t)$. As a result, we can no longer solve a nonlinear equation globally by viewing it as a small perturbation of linear equation, even if the initial data are small. This is in contrast to the higher dimensional case. The author would like to mention that decay estimates exist for one-dimensional wave equation $u_{tt} - u_{xx} + V(x) u = 0$ with a suitable potential $V(x)$. Please see D'ancona-Pierfelice \cite{potential1Df}, Donninger-Schlag \cite{potential1D}, for instance. 

\paragraph{Known results} In contrast to the higher dimensional case $d\geq 3$, much less is known about the asymptotic behaviour of solutions to one-dimensional wave equation. H. Lindblad and T. Tao \cite{Ltao1D} proved the following average $L^\infty$ decay 
\[
 \lim_{T\rightarrow +\infty} \frac{1}{T}\int_0^T \|u(\cdot,t)\|_{L^\infty(\Rm)} dt = 0
\]
by applying a quantitative version of Rademacher differentiation theorem. Recently D. Wei and S. Yang \cite{yang1D} observed the following Morawetz-type estimate 
\[
 \int_0^{+\infty} \int_{-t-1}^{t+1} \frac{((t+1)^2-x^2)|u(x,t)|^{p+1}}{(t+1)^3} dx dt < C(E)
\]
and proved that any solution with a finite energy satisfies the following point-wise decay properties
\begin{align}
 &\lim_{t\rightarrow \infty} \|u(\cdot,t)\|_{L^{p+1}(\Rm)} = 0;& &\lim_{t\rightarrow \infty} \|u(\cdot,t)\|_{L^{\infty}(\Rm)} = 0.&
\end{align}
They also gave point-wise polynomial-rate decay of solutions if the initial data decay at a certain rate near the infinity. 

\paragraph{Focusing equations} The author would like to mention that there are many results about the blow-up behaviour of solutions to the focusing equation $u_{tt} - u_{xx} = |u|^{p-1} u$. H. Levine \cite{negativeenergy} proved the finite-time blow-up of solutions as long as the initial data $(u_0,u_1) \in \dot{H}^1 \times L^2$ come with a negative energy
\[
 \int_{-\infty}^\infty \left(\frac{1}{2}|u'_0(x)|^2 + \frac{1}{2}|u_1(x)|^2 - \frac{1}{p+1}|u_0(x)|^{p+1}\right) dx < 0. 
\]
The behaviour of blow-up solutions near blow-up points (blow-up profiles and characteristic points) can be found in Merle-Zaag \cite{blowupprofile, blowupclass, isolatedness} and citations therein. 

\section{Main Results}
The majority of this work is devoted to locating the energy of solutions as $t\rightarrow \infty$. Now we give our main results here. All the details can be found in later sections. By the energy and momentum conservation laws, we may obtain another two conserved quantities: left-going energy $E_-$ and right-going energy $E_+$. 
\begin{align*}
 E_-(u,u_t) & = \int_{-\infty}^\infty \left(\frac{1}{4}|u_x(x,t)+u_t(x,t)|^2 + \frac{1}{2(p+1)}|u(x,t)|^{p+1}\right) dx = \frac{E+M}{2};\\
 E_+(u,u_t) & = \int_{-\infty}^\infty \left(\frac{1}{4}|u_x(x,t)-u_t(x,t)|^2 + \frac{1}{2(p+1)}|u(x,t)|^{p+1}\right) dx = \frac{E-M}{2}.
\end{align*}
We may follow a similar argument to the inward/outward energy theory (dimension $d\geq 3$) introduced in the author's previous works \cite{shenenergy, shen3dnonradial, shenhd}, and obtain the following energy distribution properties of global solutions to the non-linear equation \eqref{cp1}, i.e. almost all energy moves to infinity at a speed close to the light speed. Please note that this property is similar to the higher dimensional case $d\geq 3$. 
\begin{theorem}[Energy distribution] \label{energy distribution}
 Let $u$ be a solution to \eqref{cp1} with a finite energy $E$. Then for any given constant $c\in (0,1)$, we have 
 \begin{align*}
  \lim_{t\rightarrow +\infty} & \int_{-\infty}^{ct} \left(\frac{1}{4}|u_x(x,t)-u_t(x,t)|^2 + \frac{1}{2(p+1)}|u(x,t)|^{p+1}\right) dx = 0;\\
  \lim_{t\rightarrow +\infty} & \int_{-ct}^{\infty} \left(\frac{1}{4}|u_x(x,t)+u_t(x,t)|^2 + \frac{1}{2(p+1)}|u(x,t)|^{p+1}\right) dx = 0.
 \end{align*}
 As a consequence we have 
 \[
  \lim_{t\rightarrow \pm \infty} \int_{-c|t|}^{c|t|} \left(\frac{1}{2}|u_x(x,t)|^2 + \frac{1}{2}|u_t(x,t)|^2 + \frac{1}{p+1}|u(x,t)|^{p+1}\right) dx = 0.
 \]
\end{theorem}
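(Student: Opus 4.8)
The plan is to track the two directional energies along null lines and to show that each of them is swept out of the interior by the free flow, while the nonlinearity only feeds in a controllable amount of ``new'' potential energy. Throughout I write
\[
 e_+ = \tfrac{1}{4}|u_x-u_t|^2 + \tfrac{1}{2(p+1)}|u|^{p+1}, \qquad e_- = \tfrac{1}{4}|u_x+u_t|^2 + \tfrac{1}{2(p+1)}|u|^{p+1},
\]
and split them as $e_\pm = k_\pm + v$ with $k_+ = \tfrac14|u_x-u_t|^2$, $k_- = \tfrac14|u_x+u_t|^2$ and $v = \tfrac{1}{2(p+1)}|u|^{p+1}$. A direct computation from \eqref{cp1} (the energy--momentum identities, or equivalently $(\partial_t+\partial_x)(u_x-u_t)=|u|^{p-1}u$) gives the local conservation laws
\[
 \partial_t e_+ + \partial_x(k_+ - v) = 0, \qquad \partial_t e_- - \partial_x(k_- - v) = 0,
\]
whose spatial integrals recover $E_+$ and $E_-$. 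Since $e = e_+ + e_-$ and every density is nonnegative, the final ``consequence'' follows at once from the two one-sided limits, and the two one-sided limits are exchanged by the reflection $x\mapsto -x$. Hence it suffices to prove the first limit, namely $\lim_{t\to+\infty}\int_{-\infty}^{ct}e_+\,dx = 0$ for every $c\in(0,1)$.

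First I would extract a monotone quantity by computing the flux of $e_+$ across the null line $x=t+a$ moving at the light speed. The conservation law gives
\[
 \frac{d}{dt}\int_{-\infty}^{t+a}e_+(x,t)\,dx = 2\,v(t+a,t) \ge 0,
\]
so for each fixed $a$ the quantity $g(a,t):=\int_{-\infty}^{t+a}e_+(x,t)\,dx$ is nondecreasing in $t$ and bounded above by $E_+$; thus $R(a):=\lim_{t\to+\infty}g(a,t)$ exists and is nondecreasing in $a$. For fixed $c\in(0,1)$ and $a<0$ one has $ct\le t+a$ as soon as $t\ge |a|/(1-c)$, and therefore $\limsup_{t\to+\infty}\int_{-\infty}^{ct}e_+\,dx \le R(a)$ for every $a<0$. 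This reduces the theorem to showing $\lim_{a\to-\infty}R(a)=0$. Integrating the flux identity in $t$ yields
\[
 R(a) = \int_{-\infty}^a e_+(x,0)\,dx + 2\int_0^{\infty} v(t+a,t)\,dt,
\]
and the first term tends to $0$ as $a\to-\infty$ because $e_+(\cdot,0)\in L^1$. So everything comes down to the \emph{accumulated potential energy along a backward null line}: I must prove that $\int_0^\infty v(t+a,t)\,dt\to 0$ as $a\to-\infty$.

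This last step is the main obstacle. Morally the right-going energy $e_+$ propagates rightward at unit speed, so in the free case $e_+$ depends only on $x-t$, one has $v\equiv 0$, and $R(a)=\int_{-\infty}^a e_+(\cdot,0)\to 0$ immediately. In the nonlinear problem the coupling term $|u|^{p-1}u$ keeps feeding potential energy across the null line, and controlling its time integral far behind the front $x=t$ is delicate precisely because the Wei--Yang Morawetz weight $\big((t+1)^2-x^2\big)/(t+1)^3$ degenerates near the light cone. The available tools are exactly the two facts recorded in the introduction: the Morawetz estimate, which controls $|u|^{p+1}$ in the interior, and the pointwise decay $\|u(\cdot,t)\|_{L^{p+1}(\mathbb{R})}\to 0$, which forces the total potential energy to vanish. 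Concretely I would average the identity over a window of null lines,
\[
 \int_{a_1}^{a_2}\!\Big(\int_0^\infty v(t+a,t)\,dt\Big)\,da=\int_0^\infty\!\int_{t+a_1}^{t+a_2} v(x,t)\,dx\,dt,
\]
so as to convert the line integral into a genuine space--time integral accessible to the Morawetz estimate, then combine this with the monotonicity of $R$ (and with a smooth-approximation argument to justify the pointwise flux computations) to promote the resulting almost-everywhere statement to every $a$, and finally let $a\to-\infty$. Establishing that the accumulated potential energy \emph{genuinely vanishes} in this limit, rather than merely staying bounded, is the crux of the whole argument.
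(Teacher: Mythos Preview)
Your derivation of the trapezoid identity $\frac{d}{dt}g(a,t)=2v(t+a,t)$ and your instinct to average over a window of null lines are precisely the two ingredients the paper uses. The gap is the reduction itself: you reduce to showing $R(a)=\lim_{t\to\infty}\int_{-\infty}^{t+a}e_+(x,t)\,dx\to 0$ as $a\to-\infty$, but this statement is almost certainly \emph{false}. Indeed, $R(-\eta)=E_+$ is exactly the paper's retraction conjecture \eqref{conjecture1}, which the authors argue is likely true and even verify for self-similar solutions; under it $R_{-\infty}=E_+>0$, and your bound $\limsup_t\int_{-\infty}^{ct}e_+\le R_{-\infty}$ becomes vacuous. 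The error is sending $t\to\infty$ along a \emph{fixed} characteristic $x=t+a$: this gives the nonlinear flux $2v(t+a,t)$ unlimited time to push all of $E_+$ across the line, and neither the Morawetz weight (of order $|a|/t^2$ on $x=t+a$, far too degenerate) nor the bare $L^{p+1}$ decay (which gives no integrability in $t$) can stop $\int_0^\infty v(t+a,t)\,dt$ from accumulating a fixed positive amount.

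The paper avoids this by keeping $t$ finite and letting the averaging window depend on $t$. For each large $t$ it sets $d=\tfrac{1-c}{2}t$, uses the trapezoid identity only on $[0,t]$, and averages over $a\in[-2d,-d]$. Since $g(\cdot,t)$ is nondecreasing in $a$, $E_+(t;-\infty,ct)=g(-2d,t)$ is dominated by this window average, and the averaged flux term becomes
\[
\frac{1}{d}\int_0^t\!\!\int_{t'-2d}^{\,t'-d}\frac{|u(x,t')|^{p+1}}{p+1}\,dx\,dt' \;\le\; \frac{2}{(p+1)(1-c)}\cdot\frac{1}{t}\int_0^t\|u(\cdot,t')\|_{L^{p+1}}^{p+1}\,dt',
\]
a Ces\`aro mean that tends to zero by the Wei--Yang decay \cite{yang1D}. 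The idea you are missing is exactly this coupling of the window width to $t$: with a window of width $\sim t$ one divides the $\int_0^t$ by $t$ and lands on a Ces\`aro average, whereas sending $t\to\infty$ first (your $R(a)$) or using a window of fixed width destroys this mechanism.
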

\begin{remark} \label{tail is small}
 By the classic energy flux formula of full energy, we always have 
 \begin{align*}
  \lim_{R\rightarrow +\infty} & \sup_{t\in\Rm} \int_{|x|>|t|+R} \left(\frac{1}{2}|u_x(x,t)|^2 + \frac{1}{2}|u_t(x,t)|^2 + \frac{1}{p+1}|u(x,t)|^{p+1}\right) dx\\
   & \leq  \lim_{R\rightarrow +\infty} \int_{|x|>R} \left(\frac{1}{2}|u'_0(x)|^2 + \frac{1}{2}|u_1(x)|^2 + \frac{1}{p+1}|u_0(x)|^{p+1}\right) dx = 0.
 \end{align*}
 Thus Theorem \ref{energy distribution} implies that eventually all the energy is located near the light cone $|x|=|t|$. 
\end{remark}
\paragraph{Energy retraction} As an application of the energy flux formula given in Section \ref{sec: LR energy theory}, we may also show that any given light cone will eventually contain some amount of energy as $t$ tends to infinity. More precisely we have (The situation of backward light cones is similar.)
\begin{proposition} \label{proposition energy retraction}
 Let $u$ be a nonzero solution to \eqref{cp1} with a finite energy. Then for any given real number $\eta$, we have 
 \[
  \lim_{t\rightarrow +\infty} \int_{|x|<t-\eta} \left(\frac{1}{2}|u_x(x,t)|^2 + \frac{1}{2}|u_t(x,t)|^2 + \frac{1}{p+1}|u(x,t)|^{p+1}\right) dx > 0.
 \]
\end{proposition}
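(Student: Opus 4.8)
The plan is to detect the interior energy by tracking two monotone quantities built from the left/right-going energy densities. Write the full energy density as a sum $e=e_++e_-$ with
\[
 e_+ = \tfrac14|u_x-u_t|^2 + \tfrac{1}{2(p+1)}|u|^{p+1}, \qquad e_- = \tfrac14|u_x+u_t|^2 + \tfrac{1}{2(p+1)}|u|^{p+1},
\]
so that $\int_{\Rm} e_\pm\,dx = E_\pm$ are conserved. I would argue by contradiction, assuming $\lim_{t\to+\infty}\int_{|x|<t-\eta} e\,dx = 0$, and then show this forces $u\equiv 0$, contradicting that $u$ is nonzero.

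First I would set up the monotonicity. Define the right-going energy ahead of the right front and the left-going energy ahead of the left front,
\[
 \phi(t) = \int_{t-\eta}^{\infty} e_+(x,t)\,dx, \qquad \psi(t) = \int_{-\infty}^{\eta-t} e_-(x,t)\,dx .
\]
Using the local conservation law for the right-going energy from Section \ref{sec: LR energy theory}, namely $\partial_t e_+ = \partial_x f_+$ with $e_+ + f_+ = \tfrac{1}{p+1}|u|^{p+1}$, together with the Leibniz rule for the moving endpoint $x=t-\eta$ and the decay of the flux at $x=+\infty$, I obtain
\[
 \phi'(t) = -\bigl(e_+ + f_+\bigr)(t-\eta,t) = -\tfrac{1}{p+1}|u(t-\eta,t)|^{p+1} \le 0 .
\]
The symmetric computation (using $e_- - f_- = \tfrac{1}{p+1}|u|^{p+1}$) gives $\psi'(t)=-\tfrac{1}{p+1}|u(\eta-t,t)|^{p+1}\le 0$. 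Hence $\phi,\psi$ are non-increasing, with $0\le\phi\le E_+$ and $0\le\psi\le E_-$, so the limits $L_+=\lim\phi$ and $L_-=\lim\psi$ exist and satisfy $L_+\le E_+$, $L_-\le E_-$.

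Next I would evaluate the interior energy in the limit. Splitting $\int_{|x|<t-\eta} e = \int_{-(t-\eta)}^{t-\eta}(e_++e_-)$ and discarding the ``wrong-going'' pieces $\int_{-\infty}^{\eta-t}e_+$ and $\int_{t-\eta}^{\infty}e_-$, which both tend to $0$ by Theorem \ref{energy distribution}, I get
\[
 \lim_{t\to+\infty}\int_{|x|<t-\eta} e\,dx = (E_+ - L_+) + (E_- - L_-) \ge 0 .
\]
Thus the limit exists, and the contradiction hypothesis forces $L_+=E_+$ and $L_-=E_-$, as two non-negative terms summing to zero. The final rigidity step turns saturation into vanishing: since $\phi$ is non-increasing, bounded above by $E_+$, and has limit $E_+$, it must be the constant $E_+$, i.e. $\int_{-\infty}^{t-\eta}e_+(x,t)\,dx=0$ for every $t$, so $e_+\equiv 0$ and hence $u\equiv 0$ on the whole region $\{x<t-\eta\}$; symmetrically $\psi\equiv E_-$ gives $u\equiv 0$ on $\{x>\eta-t\}$. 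For $t>\eta$ these two regions already cover the entire $x$-line, so $u(\cdot,t)\equiv 0$ for all $t>\eta$, and backward uniqueness yields $u\equiv 0$, the desired contradiction. I expect the main obstacle to be the rigorous justification of the flux identity and its boundary terms for a general finite-energy solution rather than a smooth, suitably decaying one; this is precisely what the energy flux formula of Section \ref{sec: LR energy theory} provides, and I would combine it with the smooth-approximation scheme from the local well-posedness discussion to legitimize differentiating $\phi$ and $\psi$.
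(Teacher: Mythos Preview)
Your argument is correct and uses the same two ingredients as the paper---the trapezoid law (Proposition \ref{trapezoid law}) and Theorem \ref{energy distribution}---but organizes them differently. The paper first notes that $E_\eta(t)$ is monotone, so the contradiction hypothesis gives $u\equiv 0$ in the whole cone $\{|x|\le t-\eta\}$; it then extracts from this that $u$ vanishes on the two boundary light rays and invokes a separate ``vanishing lemma'' (Lemma \ref{vanishing lemma}): vanishing of $u$ along the ray $x=t-\eta$ forces, via the trapezoid law for $E_-$ and Theorem \ref{energy distribution}, that $E_-(\eta;0,\infty)=0$, hence $(u,u_t)(\cdot,\eta)=0$ on $\{x>0\}$, and finite speed of propagation fills in a half-plane. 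Your route is more direct: you never isolate the cone or the rays, but instead show that the monotone quantities $\phi$ and $\psi$ must be identically $E_+$ and $E_-$, which already forces $e_+\equiv 0$ on $\{x<t-\eta\}$ and $e_-\equiv 0$ on $\{x>\eta-t\}$, and these half-planes cover $\Rm$ for $t>\eta$. The gain is that you do not need the auxiliary Lemma \ref{vanishing lemma} (nor the finite-speed-of-propagation step). The trade-off is that the paper's lemma is of independent interest (it is reused in the proof of Corollary \ref{corollary focusing}). Two minor remarks: your formula for $\psi'$ uses the trapezoid law along a \emph{left}-going characteristic for $E_-$, which is not literally Proposition \ref{trapezoid law} but follows by the obvious symmetry $x\mapsto -x$; and in the last step you should note that $e_\pm=0$ gives not only $u=0$ but also $u_x\mp u_t=0$, so that $(u,u_t)(\cdot,t)=(0,0)$ for $t>\eta$ and backward uniqueness indeed applies.
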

\begin{remark}
 An energy flux formula shows that the integral above is an increasing function of $t$. Thus the conclusion is equivalent to saying that $u$ can not vanish in the light cone $\{(x,t): |x|<t-\eta\}$. This is not trivial. In fact, it is possible to find a nonzero solution to the 3-dimensional wave equation 
\begin{equation} \label{3dwave}
 \partial_t^2 u - \Delta u = - u^5,  \quad (x,t) \in \Rm^3 \times \Rm, 
\end{equation}
so that $u$ vanishes in a given light cone $\{(x,t): |x|<t-\eta\}$. Choose nonzero smooth initial data $(v_0,v_1)$ supported in $\{x: |x|<\eta\}$ and let $v$ be the solution to the linear wave equation with initial data $(v_0,v_1)$. Then by strong Huygens principle, $v$ always vanishes in the light cone $\{(x,t): |x|<t-\eta\}$. Let $u$ be the non-linear profiles associated to $(v,+\infty)$, i.e. a solution to \eqref{3dwave} satisfying
\[
 \lim_{t\rightarrow +\infty} \left\|(u(\cdot,t), u_t(\cdot,t))-(v(\cdot,t), v_t(\cdot,t))\right\|_{\dot{H}^1 \times L^2(\Rm^3)} = 0.
\] 
For existence of non-linear profiles, please refer to Kenig-Merle \cite{kenig1}. By Sobolev embedding, we also have $\|u-v\|_{L^6(\Rm^3)} \rightarrow 0$ as $t\rightarrow +\infty$.  Combining these convergence with the fact that $v$ vanish in the light cone, we have 
\[
 \lim_{t\rightarrow +\infty} \int_{|x|<t-\eta} \left(\frac{1}{2}|u_x(x,t)|^2 + \frac{1}{2}|u_t(x,t)|^2 + \frac{1}{6}|u(x,t)|^{6}\right) dx = 0.
\]
This means that $u$ vanishes in the light cone, since the integral above is again an increasing function of $t$ by energy flux formula. 
\end{remark}
\paragraph {Retraction conjecture}
 We conjecture that all energy will eventually retract in any given light cone. Namely given any $\eta \in \Rm$, we have
 \[
  \lim_{t\rightarrow \infty} \int_{|x|<t-\eta} \left(\frac{1}{2}|u_x(x,t)|^2 + \frac{1}{2}|u_t(x,t)|^2 + \frac{1}{p+1}|u(x,t)|^{p+1}\right) dx = E.
 \]
 This is equivalent to saying 
 \begin{align}
  \lim_{t\rightarrow +\infty} & \int_{t-\eta}^{\infty} \left(\frac{1}{4}|u_x(x,t)-u_t(x,t)|^2 + \frac{1}{2(p+1)}|u(x,t)|^{p+1}\right) dx = 0; \label{conjecture1} \\
  \lim_{t\rightarrow +\infty} & \int_{-\infty}^{\eta-t} \left(\frac{1}{4}|u_x(x,t)+u_t(x,t)|^2 + \frac{1}{2(p+1)}|u(x,t)|^{p+1}\right) dx = 0.
 \end{align}
 In these two limits we only consider right/left-going energies because we have already known that the left/right-going energy gradually vanishes in the right/left half of the space as $t$ tends to infinity. We are not able to prove these limits, but we believe that they are probably true because 
 \begin{itemize}
  \item The integrals above are both decreasing functions of $t$ by trapezoid law Proposition \ref{trapezoid law}; 
  \item The function $u_x(t+s,t)-u_t(t+s,t)$ converges weakly to zero in $L_s^2((-\eta,+\infty))$. We have already known that $\|u\|_{L^{p+1}} \rightarrow 0$. Thus the limits above hold if we might verify the convergence mentioned above is actually a strong limit in the space $L_s^2((-\eta,+\infty))$. 
  \item Although the general case is still unknown, we are at least able to verify that all self-similar solutions $u(x,t) = x^{-2/(p-1)} f(t/x)$ satisfies \eqref{conjecture1}. Here $f \in C^2((-1,+1))$ satisfies a suitable ordinary differently equation. Please note that $u$ can be defined only when $x>t$ because its initial data $(u_0,u_1) = (f(0) x^{-2/(p-1)}, f'(0)x^{-2/(p-1)-1})$ has a singularity at $x=0$. Thus we also assume $\eta<0$. Although these test functions are not globally defined in $\Rm_x \times \Rm_t$, this test is still meaningful by finite speed of propagation.  
 \end{itemize}
 More details can be found in the appendix. 
\begin{remark}
 The retraction conjecture is true for solutions to 1D linear Klein-Gordon equations $u_{tt}-u_{xx}+mu = 0$. Here $m>0$ is a constant. Without loss of generality let us assume $m=1$. By smooth approximation and cut-off techniques we may also assume that initial data $(u_0,u_1)$ are smooth and compact-supported. We combine the following standard dispersive decay estimate for the Klein-Gordon propagator (see, for instance, H\"{o}rmander \cite{hormander})
 \[
  \left\|e^{\pm it\sqrt{-\Delta +1}} f\right\|_{L_x^{\infty}(\Rm)} \lesssim (1+t^2)^{-1/2} \|(-\Delta +1) f\|_{L_x^1}
 \]
 with finite speed of propagation to obtain 
 \[
  \int_{|x|>t-\eta} |u(x,t)|^2 dx \lesssim t^{-1}, \qquad t\gg 1.  
 \]
 Since $u_x, u_t$ are also solutions to 1D linear Klein-Gordon equation with smooth and compact-supported initial data, they satisfy a similar decay estimate. In summary we have
\[
  \int_{|x|>t-\eta} (|u(x,t)|^2 + |u_x(x,t)|^2 + |u_t(x,t)|^2)dx \lesssim t^{-1}, \qquad t\gg 1.  
 \]
\end{remark}
\paragraph{Application on focusing equation} One-dimensional non-linear wave equation has an interesting property: a focusing equation becomes a defocusing equation if we switch $t$ and $x$, and vice versa. This gives us an application of left/right-going energy theory on the focusing wave equation. 
\begin{corollary} \label{corollary focusing}
 Let $u$ be a solution to the focusing one-dimensional wave equation $u_{tt} - u_{xx} = |u|^{p-1} u$ with smooth and compactly-supported initial data. Then $u$ satisfies either of the following
 \begin{itemize}
  \item The solution $u$ blows up in finite time in the positive time direction;
  \item The solution $u$ is defined for all time $t\in [0,\infty)$ and satisfies
  \[
   \lim_{t\rightarrow +\infty} \|(u(\cdot,t), u_t(\cdot,t))\|_{\dot{H}^1 \times L^2} = + \infty. 
  \]
 \end{itemize}
\end{corollary}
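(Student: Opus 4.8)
The plan is to exploit the time--space duality noted just above the statement: if $u$ solves the focusing equation then $v(x,t):=u(t,x)$ solves the defocusing equation \eqref{cp1}, so that the left/right-going energy theory, and in particular Proposition \ref{proposition energy retraction}, become available. I would argue by contraposition: assume $u$ does not blow up in positive time, i.e. $u$ is a smooth global solution on $[0,+\infty)$, and show $\|(u(\cdot,t),u_t(\cdot,t))\|_{\dot{H}^1\times L^2}\to+\infty$.

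First I would record the algebraic swap. A direct computation gives $v_{tt}-v_{xx}=-|v|^{p-1}v$, so $v$ is a defocusing solution. Since $(u_0,u_1)$ are supported in $\{|x|\le R\}$, finite speed of propagation yields $\mathrm{supp}\,u\subset\{|x|\le |t|+R\}$, hence $v$ is supported in $\{|t|\le |x|+R\}$; in particular $v\equiv 0$ on the interior light cone $\{|x|<|t|-R\}$. Moreover $v$ lives over $\{x\ge 0\}$ (the range of the global time variable of $u$), and on the line $x=0$ its side data are precisely $v(0,\cdot)=u_0$ and $v_x(0,\cdot)=u_1$, which are smooth and compactly supported. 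Using conservation of the focusing energy $E=\int(\tfrac12|u_x|^2+\tfrac12|u_t|^2-\tfrac1{p+1}|u|^{p+1})\,dx$, one has $\int_{\mathbb{R}}(|u_x|^2+|u_t|^2)\,dx=2E+\tfrac{2}{p+1}\int_{\mathbb{R}}|u|^{p+1}\,dx$, so the desired conclusion is equivalent to $\|u(\cdot,t)\|_{L^{p+1}}\to+\infty$.

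I would therefore assume, for contradiction, that $\|u(\cdot,t)\|_{L^{p+1}}$ does not diverge, and use the compactly supported side data together with global defocusing well-posedness to produce a genuine nonzero finite-energy defocusing solution $\tilde v$ agreeing with $v$ on a full light cone, hence still vanishing in $\{|x|<|t|-\eta\}$ for some $\eta$. Applying Proposition \ref{proposition energy retraction} to $\tilde v$ (equivalently, the remark that a nonzero finite-energy defocusing solution cannot vanish in any light cone) gives a contradiction. Theorem \ref{energy distribution} enters in the same spirit: the exterior support of $v$ forces all of its left-going energy into the region that Theorem \ref{energy distribution} requires to be asymptotically empty, again forcing $\tilde v\equiv 0$ and hence $u\equiv 0$.

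The hard part will be upgrading this qualitative dichotomy into the precise statement $\|(u,u_t)\|_{\dot{H}^1\times L^2}\to+\infty$ rather than a mere $\limsup=+\infty$, and, relatedly, making the extraction/extension step legitimate: the bound one obtains most naturally from $\|u(\cdot,t_n)\|_{L^{p+1}}\le M$ is a bound on a \emph{timelike} flux in the $v$-picture, not on a finite-energy Cauchy slice, so converting it into admissible finite-energy data for Proposition \ref{proposition energy retraction} requires care. I expect to close this gap with the monotonicity furnished by the trapezoid law (Proposition \ref{trapezoid law}) and the energy flux formula of Section \ref{sec: LR energy theory}, which should render the relevant energy eventually monotone in $t$, so that unboundedness forces divergence. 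It is exactly here that compact support is indispensable: a non-compactly supported global solution such as a static soliton has constant finite $\dot{H}^1\times L^2$ norm, so any correct argument must use the compact-support hypothesis — through the exterior support of $v$ and the vanishing of $v$ inside light cones — to exclude such behaviour.
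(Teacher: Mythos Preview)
Your overall architecture matches the paper's: swap $(x,t)\leftrightarrow(t,x)$ to turn the focusing solution $u$ into a defocusing solution $v$ on the half-space $\{x\ge 0\}$, observe from finite speed of propagation that $v$ vanishes inside $\{|x|<|t|-R\}$, and then invoke the ``nonzero solutions cannot vanish in a light cone'' mechanism (Proposition~\ref{proposition energy retraction}/Lemma~\ref{vanishing lemma}) to force $v\equiv 0$, a contradiction. The contraposition you set up, and your reformulation $\|(u,u_t)\|_{\dot H^1\times L^2}^2=2E+\tfrac{2}{p+1}\|u\|_{L^{p+1}}^{p+1}$, are both fine.

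The genuine gap is exactly the step you flag but do not resolve: converting the subsequence bound $\|(u(\cdot,t_n),u_t(\cdot,t_n))\|_{\dot H^1\times L^2}\le M$ into \emph{finite defocusing energy for $v$ on a spacelike slice}. Your proposed fix via the trapezoid law is circular: Proposition~\ref{trapezoid law} and the energy flux identities of Section~\ref{sec: LR energy theory} are statements about finite-energy defocusing solutions, so you cannot invoke them for $v$ until you already know $E_\pm(t;0,\infty)<\infty$ for some $t$. Nor do they give monotonicity of $\|(u,u_t)\|_{\dot H^1\times L^2}$ in the $u$-time variable (that would be monotonicity along a vertical line in the $v$-picture, which the trapezoid law does not address). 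The paper closes this gap by a completely different device on the \emph{focusing} side: a localized virial/Morawetz identity. With $a(y)=y$ on $[-R,R]$ and $a\equiv\pm R$ outside, one sets $I(s)=\int a(y)\,u_y u_s\,dy$ and computes
\[
I'(s)=-\int_{-R}^{R}\Bigl(\tfrac12 u_s^2+\tfrac12 u_y^2+\tfrac{1}{p+1}|u|^{p+1}\Bigr)\,dy,
\]
so integrating in $s$ and using $|I(s)|\le (R/2)\|(u,u_s)\|_{\dot H^1\times L^2}^2$ along the bounded subsequence yields $\int_0^\infty\!\!\int_{-R}^{R}(\tfrac12 v_x^2+\tfrac12 v_t^2+\tfrac{1}{p+1}|v|^{p+1})\,dx\,dt<\infty$. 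This is precisely the spacetime bound that produces a time $t_0\in(-R,R)$ with $E(t_0;0,\infty)<\infty$, after which the half-space flux formula propagates finiteness to all $t\in[-R,R]$, and Lemma~\ref{vanishing lemma} (applied to a finite-energy extension $\tilde v$ from the slice $t=R$) forces $v\equiv 0$ on $\{x\ge R\}$. Note also that the paper uses Lemma~\ref{vanishing lemma} rather than Proposition~\ref{proposition energy retraction} directly, because $v$ is only defined on $\{x\ge 0\}$; your extension idea is in the right spirit, but it too needs finite-energy data on a slice, hence the virial step is unavoidable.
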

\begin{remark}
 Local theory shows that if $u$ blows up in finite time $T_+>0$, then we also have $\|(u(\cdot,t), u_t(\cdot,t))\|_{\dot{H}^1 \times L^2}\rightarrow +\infty$ as $t\rightarrow T_+$. Thus the conclusion of Corollary \ref{corollary focusing} can be rewritten as: the norm  $\|(u(\cdot,t), u_t(\cdot,t))\|_{\dot{H}^1 \times L^2}$ eventually blows up as $T$ approaches the blow-up time $T_+$. ($T_+ = +\infty$ if $u$ is a global solution) This kind of solutions are usually called Type I blow-up solutions. In contrast, a Type II blow-up solution is a global non-scattering solution satisfying 
 \[
  \sup_{t\geq 0} \|(u(\cdot,t), u_t(\cdot,t))\|_{\dot{H}^1 \times L^2} < +\infty.
 \]
 Type II blow-up solutions to focusing, energy critical wave equation in dimension 3 or higher have been intensively studied. See, for instance, Duychaerts-Jia-Kenig \cite{djknonradial} and Duychaerts-Kenig-Merle \cite{dkmradial, se, oddhigh}. 
\end{remark}
\paragraph{Energy concentration} A natural question about energy distribution is whether the energy may concentrates in a small region. The following result is a first attempt to investigate this problem. It shows that under suitable conditions you can not find a series of intervals $(x(t), x(t)+\delta(t))$ depending on time so that (i) the length $\delta(t)$ of intervals satisfies $\delta(t) \rightarrow 0$ as $t\rightarrow \infty$; (ii) The right-going energy outside the interval $(x(t), x(t)+\delta(t))$ decays fast. (i.e. $o(1/t)$)
\begin{proposition} \label{prop interaction}
 Let $u$ be a solution to \eqref{cp1} with compactly-supported and even\footnote{Even data in the one-dimensional case correspond to radial data in higher dimensions} initial data. Then we have 
\[
 \liminf_{t\rightarrow +\infty} \iint_{\Rm^2} |x_1-x_2| e_+(x_1,t) e_+(x_2,t) dx_1 dx_2 > 0. 
\]
Here $e_+(x,t)$ is the right-going energy density given in Definition \ref{def rl energy}. 
\end{proposition}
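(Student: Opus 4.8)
The plan is to treat $I(t)$ as an interaction--Morawetz functional for the right-going energy density $e_+$ and differentiate it in time. Write $P=\frac{1}{p+1}|u|^{p+1}$ for the full potential energy density, so that the potential part of each of $e_\pm$ is $\frac12P$; the local conservation laws then read $\partial_t e_+ + \partial_x(e_+-P)=0$ and, symmetrically, $\partial_t e_- - \partial_x(e_--P)=0$. Differentiating $I$, symmetrizing in $(x_1,x_2)$ and integrating by parts once in $x_1$ (boundary terms vanish by finite speed of propagation and compact support), I would obtain
\[
 \frac{d}{dt}I(t) = 2\iint \operatorname{sgn}(x_1-x_2)\bigl(e_+(x_1,t)-P(x_1,t)\bigr)e_+(x_2,t)\,dx_1\,dx_2.
\]
The piece carrying $e_+(x_1)e_+(x_2)$ integrates to zero because $\operatorname{sgn}(x_1-x_2)$ is antisymmetric while the density is symmetric; this encodes the fact that under pure transport $e_+$ rigidly translates and $I$ is exactly conserved. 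Hence the entire time derivative is nonlinear, $\frac{d}{dt}I = -2\iint \operatorname{sgn}(x_1-x_2)P(x_1)e_+(x_2)\,dx_1\,dx_2$.

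Next I would exploit the evenness of the data, which gives $u(-x,t)=u(x,t)$, hence $e_+(-x,t)=e_-(x,t)$ and $P$ even. Consequently $G(x):=\int \operatorname{sgn}(x_1-x)P(x_1)\,dx_1$ is odd with $G(x)=-2\int_0^x P\le 0$ for $x\ge 0$. Folding the two half-lines together and using $e_+-e_-=-u_xu_t$, the derivative collapses to
\[
 \frac{d}{dt}I(t) = 2\int_0^\infty |G(x)|\,\bigl(e_+(x,t)-e_-(x,t)\bigr)\,dx, \qquad |G(x)| = 2\int_0^x P(y,t)\,dy.
\]
The decisive feature is the sign of $e_+-e_-$: on the right half-line the right-going energy dominates, since $\int_0^\infty e_+\to E_+>0$ while $\int_0^\infty e_-\to 0$ by the energy distribution Theorem~\ref{energy distribution}. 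Thus the only negative contribution is the ``wrong-way'' energy, controlled by $\frac{d}{dt}I(t)\ge -2\|P(\cdot,t)\|_{L^1}\int_0^\infty e_-(x,t)\,dx$, a product of two quantities that both tend to zero.

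To finish I would argue in two stages. First, I would show the error $\|P\|_{L^1}\int_0^\infty e_-\,dx$ is integrable in time; this upgrades the differential inequality to the statement that $t\mapsto I(t)+\int_0^t(\text{error})$ is nondecreasing, so $\lim_{t\to\infty}I(t)$ exists. Second, I would prove this limit is strictly positive by ruling out concentration of $e_+$. The right-going field $w=u_t-u_x$ solves the transport equation $(\partial_t+\partial_x)w=-|u|^{p-1}u$; provided the source is summable along characteristics, $w(\cdot+t,t)$ converges in $L^2$ to a fixed profile $w_\infty$. Since the right-going kinetic energy $\frac14\|w(\cdot,t)\|_{L^2}^2=E_+-\frac12\|P\|_{L^1}\to E_+>0$, the limit $w_\infty$ is a nonzero $L^2$ function, and $e_+(\cdot,t)$ is asymptotically a rigid translate of $\frac14 w_\infty^2$; a nonzero $L^2$ profile has $\iint |\xi_1-\xi_2|\,w_\infty^2(\xi_1)w_\infty^2(\xi_2)\,d\xi_1\,d\xi_2>0$, whence $\liminf_t I(t)>0$.

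The hard part will be the time-integrability in the first stage, equivalently the summability of the nonlinear source. Both $\|P\|_{L^1}=\frac{1}{p+1}\|u\|_{L^{p+1}}^{p+1}$ and $\int_0^\infty e_-\,dx$ decay, but the only quantitative input is the Wei--Yang Morawetz estimate of \cite{yang1D}, whose weight $\frac{(t+1)^2-x^2}{(t+1)^3}$ degenerates precisely on the light cone $|x|=|t|$, which is exactly where both $P$ and the energy concentrate for large $t$. I would therefore expect the crux to be an improvement of the decay of $\|u\|_{L^{p+1}}$ near the cone, or a quantitative bound on the wrong-way energy $\int_0^\infty e_-\,dx$, obtained by combining the Morawetz estimate with the energy-flux identities and the trapezoid law of Proposition~\ref{trapezoid law}, together with the finite support of the data.
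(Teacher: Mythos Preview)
Your differentiation and use of evenness are correct and coincide with the paper's: one arrives at
\[
 I'(t)\;=\;-2\iint \operatorname{sgn}(x_1-x_2)\,P(x_1,t)\,e_+(x_2,t)\,dx_1\,dx_2,
\]
whose only negative contribution comes from the ``wrong-way'' energy $\int_{-\infty}^{0} e_+(\cdot,t)=\int_{0}^{\infty} e_-(\cdot,t)$. Where your proposal diverges from the paper---and where it has a real gap---is in how to control this negative part. You reduce the problem to the time-integrability of $\|P(\cdot,t)\|_{L^1}\int_{0}^{\infty} e_-(\cdot,t)\,dx$, and separately to the summability of $|u|^{p-1}u$ along characteristics (needed for your second stage). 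Neither of these is available: Wei--Yang gives $\|u(\cdot,t)\|_{L^{p+1}}\to 0$ with no rate, and the Morawetz weight degenerates exactly on the cone where all the mass sits, as you yourself observe. So as written, both stages stall at the same missing quantitative decay.

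The paper closes this gap by a self-referential bound that you are overlooking. By Theorem~\ref{energy distribution}, for large $t$ one has $\int_{t/2}^{\infty} e_+(x,t)\,dx>E_+/2$; pairing this region with $\{x_1<0\}$ inside the definition of $I(t)$ gives the Chebyshev-type estimate
\[
 \int_{-\infty}^{0} e_+(x,t)\,dx \;\le\; \frac{4\,I(t)}{E_+\,t},
 \qquad
 \|u(\cdot,t)\|_{L^{p+1}}^{p+1}\;\le\;\frac{16(p+1)\,I(t)}{E_+\,t}.
\]
Feeding both factors back into the negative part of $I'$ yields a \emph{closed} differential inequality $I'(t)\ge -C\,I(t)^2/t^2$, equivalently $(1/I)'\le C/t^2$, which is integrable and immediately gives $\liminf_{t\to\infty} I(t)>0$. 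No profile convergence, no summability along characteristics, and no improvement of the Morawetz weight are needed. Your two decaying error factors are not merely $o(1)$: each is $O(I(t)/t)$, and that is precisely what makes the argument close.
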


\section{Left/Right-going Energy Theory} \label{sec: LR energy theory}
We start by introducing some notations for our convenience. 
\begin{definition}[Right/left-going energy] \label{def rl energy}
 We first define the right/left-going energy density. 
 \begin{align*}
  &e_+(x,t) = \frac{1}{4}|u_x(x,t)-u_t(x,t)|^2 + \frac{1}{2(p+1)}|u(x,t)|^{p+1}& \\
   &e_-(x,t) = \frac{1}{4}|u_x(x,t)+u_t(x,t)|^2 + \frac{1}{2(p+1)}|u(x,t)|^{p+1}&
 \end{align*}
 Thus the integral $\int_\Rm e_\pm (x,t) dx$ defines the right/left-going energy and is a constant for all $t\in \Rm$. We use the following notation to represent the right/left-going energy in a given region $J \subset \Rm$ at time $t$
 \[
  E_\pm (t; J) = \int_J e_\pm (x,t) dx. 
 \]
 In particular, if $J=(a,b)$ is an interval ($-\infty \leq a \leq b\leq \infty$), we use the notation $E_\pm (t; a,b) = E_\pm(t; J)$.  
\end{definition}
\noindent The main tool of our left/right-going energy theory is the following energy flux formula
\begin{proposition}[energy flux] \label{energy flux formula}
 Let $\Gamma$ be a simple, closed curve in $\Rm_x \times \Rm_t$. For convenience we also assume that $\Gamma$ consists of finite line segments, each of which is paralleled to $x$-axis, $t$-axis or light rays $t\pm x=0$. Then we have 
 \begin{align}
  &\int_\Gamma e_+(x,t) dx + \left[-\frac{1}{4}|u_x(x,t)-u_t(x,t)|^2 + \frac{1}{2(p+1)}|u(x,t)|^{p+1}\right] dt = 0; \label{flux right} \\
  &\int_\Gamma e_-(x,t) dx + \left[+\frac{1}{4}|u_x(x,t)+u_t(x,t)|^2 - \frac{1}{2(p+1)}|u(x,t)|^{p+1}\right] dt = 0. \nonumber
 \end{align}
\end{proposition}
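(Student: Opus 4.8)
The plan is to read the left-hand side as the circulation of a differential $1$-form and to reduce its vanishing to a pointwise conservation law carried by the equation \eqref{cp1}. Setting
\[
 \omega_+ = e_+(x,t)\,dx + \left[-\tfrac{1}{4}|u_x-u_t|^2 + \tfrac{1}{2(p+1)}|u|^{p+1}\right]dt =: e_+\,dx + g_+\,dt,
\]
the identity \eqref{flux right} is precisely the assertion that $\oint_\Gamma \omega_+ = 0$. Since $\Gamma$ is a simple closed curve made of finitely many line segments, it bounds a region $D$, and Green's theorem gives $\oint_\Gamma \omega_+ = \pm\iint_D (\partial_x g_+ - \partial_t e_+)\,dx\,dt$. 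It therefore suffices to establish the pointwise identity $\partial_t e_+ = \partial_x g_+$ on solutions.

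First I would verify this for classical ($C^2$), compactly supported solutions. The cleanest computation uses the characteristic coordinates $a=x-t$, $b=x+t$, in which \eqref{cp1} reads $4u_{ab}=|u|^{p-1}u$ and $u_x-u_t=2u_a$. A direct substitution rewrites the form as $\omega_+ = u_a^2\,da + \tfrac{1}{2(p+1)}|u|^{p+1}\,db$, so that
\[
 d\omega_+ = \left(\tfrac{1}{2}|u|^{p-1}u\,u_a - 2u_a u_{ab}\right)da\wedge db = 0,
\]
the cancellation being exactly the equation $u_{ab}=\tfrac14|u|^{p-1}u$. (The same identity is available in the original variables by differentiating $e_+$ and $g_+$ and eliminating $u_{tt}$ through the equation.) This proves \eqref{flux right} for every such solution, and the companion formula for $e_-$ follows from the symmetric roles of the two null directions, i.e. the interchange $a\leftrightarrow b$ (equivalently $t\mapsto -t$), which maps $\omega_+$ to the corresponding form $\omega_-=u_b^2\,db+\tfrac{1}{2(p+1)}|u|^{p+1}\,da$ while preserving $4u_{ab}=|u|^{p-1}u$.

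It remains to dispense with the smoothness and support hypotheses for general finite-energy solutions, and this is where the genuine work lies. Mirroring the approximation scheme already used for the energy conservation law, I would approximate the data in $(\dot H^1\cap L^{p+1})\times L^2$ by smooth, compactly supported data, apply the formula to the resulting smooth solutions, and pass to the limit. On the segments of $\Gamma$ parallel to the $x$-axis the flux integrand is controlled by the $L^2$ and $L^{p+1}$ norms of $(u_x,u_t)$ and $u$ at a fixed time, which converge by the local theory; similarly the integrals over right-moving characteristics $a=\mathrm{const}$ involve only $\tfrac{1}{2(p+1)}|u|^{p+1}$, controlled by the uniform boundedness and continuity of $u$. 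The delicate contributions come from the vertical segments $x=\mathrm{const}$ and the left-moving characteristics $b=\mathrm{const}$, where one must integrate the trace of $u_x-u_t=2u_a$; I expect showing that these traces exist in $L^2_{\mathrm{loc}}$ and depend continuously on the data to be the main obstacle. This can be extracted from the D'Alembert representation underlying the local well-posedness, or circumvented by first proving the identity on a slightly shrunk curve on which all traces are taken in the interior and then letting it converge to $\Gamma$ by dominated convergence.
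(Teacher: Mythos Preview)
Your proposal is correct and follows essentially the same route as the paper: verify the pointwise conservation law $\partial_t e_\pm = \partial_x g_\pm$ (you do it in null coordinates, the paper in the original variables), invoke Green's formula for $C^2$ solutions, and then pass to general finite-energy solutions by smooth approximation. The paper likewise omits the details of the approximation step, so your discussion of the trace issues along vertical and characteristic segments actually goes slightly beyond what is written there.
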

\begin{proof}
 If the solution is $C^2$, then these identities immediately follow an application of Green's formula. Because a simple calculation shows
\begin{align*}
 \frac{\partial}{\partial t} e_+(x,t) &= \frac{\partial}{\partial x} \left[-\frac{1}{4}|u_x(x,t)-u_t(x,t)|^2 + \frac{1}{2(p+1)}|u(x,t)|^{p+1}\right]; \\
 \frac{\partial}{\partial t} e_-(x,t) &= \frac{\partial}{\partial x} \left[+\frac{1}{4}|u_x(x,t)+u_t(x,t)|^2 - \frac{1}{2(p+1)}|u(x,t)|^{p+1}\right]. 
\end{align*}
If the solution is not sufficiently smooth, we also need to apply standard smooth approximation techniques. Here we skip the details. 
\end{proof}
\begin{remark}
 We may also consider more general simple, closed curves. But the choices here are sufficient for our application. All the line integrals involved are well-defined and finite because 
 \begin{itemize}
  \item The solution $u$ is continus and bounded in $\Rm_x \times \Rm_t$;
  \item The identity $(\partial_x + \partial_t)(u_x(x,t)-u_t(x,t)) = -|u|^{p-1}u$ shows that $u_x(x,t)-u_t(x,t)$ is Lipschitz (with a universal Lipschitz constant) along the lines $x-t=\eta, \, \eta \in \Rm$ except for $\eta$ in a set of measure zero. For those $\eta$ singularity may be introduced by initial data and carried on along light rays. Please refer to Reed \cite{singularity} for more details on the propagation of singularities along light rays. In addition, $|u_x(x,t)-u_t(x,t)|^2$ is integrable in any compact subset of $\Rm_x \times \Rm_t$. Thus the line integral of $|u_x(x,t)-u_t(x,t)|^2$ along a line segment is always meaningful and finite as long as the line segment is not paralleled to $x-t = 0$. The situation of $|u_x(x,t)+u_t(x,t)|^2$ is similar. Another way to show the integrability of $|u_x(x,t)\pm u_t(x,t)|^2$ along the line segments is to observe that the contribution of nonlinear term in D'Alembert formula
 \[
  -\frac{1}{2} \int_0^t \int_{x-t+t'}^{x+t-t'} |u|^{p-1} u(x',t') dx' dt'
 \]
 is always a $C^1$ function. Thus it suffices to consider the linear propagation of initial data. 
 \end{itemize}
\end{remark}

 \begin{figure}[h]
 \centering
 \includegraphics[scale=0.75]{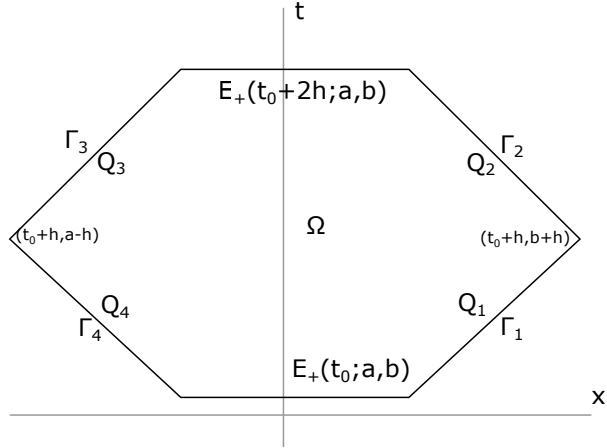}
 \caption{Illustration of integral path} \label{figure flux1D}
\end{figure}

\paragraph{Physical interpretation} Now we explain why the identities given above are actually energy flux formula and explain the physical meanings of line integral along different types of line segments. We do this by considering an example. Let $\Gamma$ be the boundary of the region shown in figure 
\ref{figure flux1D}. Then we may write the energy flux formula for right-going energy \eqref{flux right} in details:
\begin{align*}
 E_+(t+2h; a, b) & = E_+(t; a,b) + Q_1 - Q_2 - Q_3 + Q_4.
\end{align*}
\begin{align*}
 &Q_1 = \frac{1}{p+1} \int_{t_0}^{t_0+h} |u(b\!-\!t_0\!+\!t,t)|^{p+1} dt;& &Q_2 = \frac{1}{2} \int_{t_0+h}^{t_0+2h} |(u_x-u_t)(b\!+\!t_0\!+\!2h\!-\!t,t)|^2 dt;& \\
 &Q_3 = \frac{1}{p+1} \int_{t_0+h}^{t_0+2h} |u(a\!-\!t_0\!-\!2h\!+\!t,t)|^{p+1} dt;& &Q_4 = \frac{1}{2} \int_{t_0}^{t_0+h} |(u_x-u_t)(a\!+\!t_0\!-\!t,t)|^2 dt.&
\end{align*}
\noindent The difference of right-going energies is a sum of four terms. $Q_1$ is the amount of energy gained along the line segment $\Gamma_1$ due to non-linear effect; $Q_2$ is the amount of energy moving outside the region across $\Gamma_2$ by the linear propagation; $Q_3$ is the amount of energy which ``leaks'' along the line segment $\Gamma_3$ due to non-linear effect; Finally $Q_4$ is the amount of energy moving inside the region across $\Gamma_4$ by the linear propagation. 

\paragraph{Trapezoid law} The following proposition will be frequently used in the later part of this work:
\begin{proposition}[Trapezoid law] \label{trapezoid law}
 Let $\eta, t_1, t_2 \in \Rm$. We have 
 \begin{align}
  E_+(t_2; -\infty, t_2-\eta) - E_+(t_1; -\infty, t_1-\eta) & = E_+(t_1; t_1-\eta, \infty) - E_+(t_2; t_2-\eta, \infty) \nonumber \\
   & = \frac{1}{p+1} \int_{t_1}^{t_2} |u(t-\eta,t)|^{p+1} dt; \label{trapezoid1} \\
  E_-(t_2; -\infty, t_2-\eta) - E_-(t_1; -\infty, t_1-\eta) & = E_-(t_1; t_1-\eta, \infty) - E_-(t_2; t_2-\eta, \infty) \nonumber \\
   & = \frac{1}{2} \int_{t_1}^{t_2} |u_x(t-\eta,t)+u_t(t-\eta,t)|^2 dt. \nonumber
 \end{align}
\end{proposition}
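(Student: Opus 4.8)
The plan is to obtain each identity by applying the energy flux formula (Proposition \ref{energy flux formula}) to a carefully chosen closed curve $\Gamma$, specifically a \emph{trapezoid} whose slanted side runs along the right-going characteristic $x = t - \eta$. This is the geometric object that gives the proposition its name, and the whole argument is essentially bookkeeping of which line integrals survive on each edge.

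First I would treat the right-going identity \eqref{trapezoid1}. Fix $\eta$ and two times $t_1 < t_2$ (the case $t_1 = t_2$ is trivial, and $t_1 > t_2$ follows by symmetry of the statement). Consider the closed curve $\Gamma$ bounding the region
\[
 \{(x,t): t_1 \le t \le t_2,\ x \le t - \eta\},
\]
suitably truncated on the left at some large $x = -R$ so that $\Gamma$ is a genuine finite closed curve of the admissible type. Its boundary consists of four pieces: the bottom horizontal segment at $t = t_1$ (from $x=-R$ to $x = t_1 - \eta$), the top horizontal segment at $t = t_2$, the left vertical segment at $x = -R$, and the slanted characteristic segment $x = t - \eta$ joining $(t_1-\eta, t_1)$ to $(t_2-\eta, t_2)$. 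I would apply \eqref{flux right}, traversing $\Gamma$ once (say counterclockwise), so that the integral over the whole curve vanishes. On the two horizontal segments $dt = 0$, so only the $e_+\,dx$ term contributes, giving (with orientation) $E_+(t_1; -R, t_1-\eta) - E_+(t_2; -R, t_2-\eta)$. The key simplification occurs on the slanted side: along $x = t - \eta$ one has $dx = dt$, and in the integrand $e_+\,dx + [-\tfrac14|u_x-u_t|^2 + \tfrac{1}{2(p+1)}|u|^{p+1}]\,dt$ the two quadratic terms $\tfrac14|u_x-u_t|^2\,dx$ and $-\tfrac14|u_x-u_t|^2\,dt$ \emph{cancel exactly}, leaving only $\tfrac{1}{p+1}|u|^{p+1}\,dt$ integrated over $t\in[t_1,t_2]$. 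This is precisely why the right-going characteristic is the correct slanted edge — it kills the troublesome quadratic flux and isolates the nonlinear term $\tfrac{1}{p+1}\int_{t_1}^{t_2}|u(t-\eta,t)|^{p+1}\,dt$.

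To finish I would send $R \to +\infty$. The contribution of the left vertical segment $x = -R$ tends to zero because the total right-going energy is finite and conserved, so its tail past $x=-R$ vanishes uniformly (cf.\ Remark \ref{tail is small}); this limit is the one point requiring a little care, but it is controlled entirely by finiteness of $\int_\Rm e_+\,dx$. Passing to the limit converts the truncated half-line energies into $E_+(t_1; -\infty, t_1-\eta)$ and $E_+(t_2; -\infty, t_2-\eta)$, yielding the first equality in \eqref{trapezoid1}. The second equality, involving $E_+(t;\,t-\eta,\infty)$, then follows \emph{for free} from conservation of total right-going energy: since $E_+(t; -\infty, t-\eta) + E_+(t; t-\eta, \infty) = \int_\Rm e_+\,dx$ is independent of $t$, the increment of the left piece is exactly the negative increment of the right piece. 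The left-going identity is proved identically, applying the second flux formula in Proposition \ref{energy flux formula} to the same trapezoid; now the cancellation along $x=t-\eta$ eliminates the $|u|^{p+1}$ term instead and leaves $\tfrac12\int_{t_1}^{t_2}|u_x+u_t|^2\,dt$.

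The main obstacle is not the algebra of the line integrals — that is routine once the characteristic edge is chosen — but rather justifying the limit $R\to\infty$ and ensuring all the line integrals along the slanted edge are well-defined despite the possible lack of $C^2$ regularity of $u$. As the remark following Proposition \ref{energy flux formula} explains, $|u_x \pm u_t|^2$ is integrable along any segment not parallel to the corresponding characteristic, and here the slanted edge \emph{is} parallel to $x-t=\mathrm{const}$; fortunately, on that edge the quadratic term cancels and only $|u|^{p+1}$ (which is continuous and bounded) remains, so no delicate integrability issue arises there. I would therefore rely on the smooth-approximation framework already invoked in Proposition \ref{energy flux formula} to establish the flux identity for the truncated trapezoid, and then take $R\to\infty$ at the very end using only the finiteness of the conserved energy.
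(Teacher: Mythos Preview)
Your approach is correct and very close in spirit to the paper's, but the geometry differs in one instructive way. The paper does \emph{not} truncate on the left with a vertical wall $x=-R$; instead it uses a \emph{parallelogram} $\{-\eta < x-t < s,\ t_1<t<t_2\}$ whose far side is another right-going characteristic $x=t+s$. The advantage is that on that far edge the same cancellation you exploit on $x=t-\eta$ occurs, so the only surviving boundary term at infinity is $\tfrac{1}{p+1}\int_{t_1}^{t_2}|u(t+s,t)|^{p+1}\,dt$. The paper then disposes of this by the Fubini/liminf trick: $\int_\Rm\!\int_{t_1}^{t_2}|u(t+s,t)|^{p+1}\,dt\,ds<\infty$ forces $\liminf_{s\to\infty}$ of the inner integral to be $0$, and that suffices.

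Your trapezoid works too, but your justification for the vertical segment at $x=-R$ is not quite right. On that edge \emph{both} the quadratic term $\tfrac14|u_x-u_t|^2(-R,t)$ and the potential term survive, and the smallness of the energy tail $\int_{-\infty}^{-R} e_+\,dx$ (which is what Remark~\ref{tail is small} gives) says nothing about pointwise values at $x=-R$. What you actually need is the same Fubini argument: $\int_{0}^{\infty}\!\int_{t_1}^{t_2}|u_x-u_t|^2(-R,t)\,dt\,dR<\infty$ by finite energy, hence $\liminf_{R\to\infty}\int_{t_1}^{t_2}|u_x-u_t|^2(-R,t)\,dt=0$, and together with $u(-R,t)\to 0$ uniformly this kills the vertical contribution along a subsequence, which is enough. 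So the fix is minor, but the paper's parallelogram is slightly cleaner because the characteristic cancellation does half the work for you at the far boundary.
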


 \begin{figure}[h]
 \centering
 \includegraphics[scale=1.2]{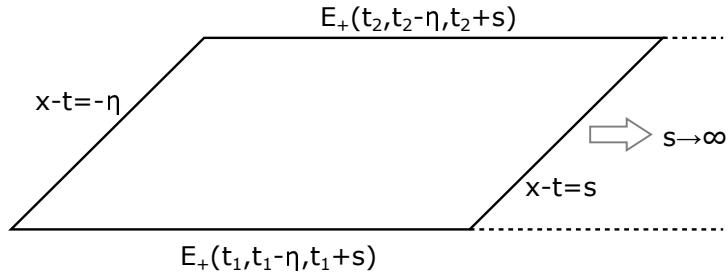}
 \caption{Illustration of parallelogram} \label{figure parallel2}
\end{figure}

\begin{proof}
 Let $\Gamma$ be the boundary of the parallelogram $\Omega = \{(x,t): -\eta<x-t<s, t_1<t<t_2\}$, as shown in figure \ref{figure parallel2}. Here $s$ is a large parameter. We apply the energy flux formula of right-going energy on $\Gamma$ and obtain 
\begin{align}
 E_+(t_1; t_1-\eta, t_1+s) + \frac{1}{p+1} \int_{t_1}^{t_2} |u(t+s,t)|^{p+1} dt  & - E_+(t_2; t_2-\eta, t_2+s)\nonumber\\
  & -  \frac{1}{p+1} \int_{t_1}^{t_2} |u(t-\eta,t)|^{p+1} dt = 0. \label{parallelogram1}
\end{align}
We observe the fact 
\[
 \int_{-\infty}^\infty \left(\int_{t_1}^{t_2} |u(t+s,t)|^{p+1} dt \right)ds = \iint_{\Rm_x \times [t_1,t_2]} |u(x,t)|^{p+1} dx dt \leq (t_2-t_1)(p+1)E < +\infty. 
\]
Thus we have the lower limit
\[
 \liminf_{s \rightarrow +\infty} \int_{t_1}^{t_2} |u(t+s,t)|^{p+1} dt = 0.
\]
This enables us to make $s\rightarrow +\infty$ in \eqref{parallelogram1} and obtain the second half of \eqref{trapezoid1}. The first half of this identity immediately follows the conservation law: 
\[
 E_+(t_1; -\infty, t_1-\eta) + E_+(t_1; t_1-\eta, \infty)  = E_+ = E_+(t_2; -\infty, t_2 - \eta) + E_+(t_2; t_2-\eta, \infty).
\]
Thus we finish the proof of \eqref{trapezoid1}. The second identity can be proved in the same way. 
\end{proof}
\section{Energy Distribution}
In this section we prove Theorem \ref{energy distribution}. It suffices to consider the asymptotic behaviour of right-going energy as $t\rightarrow +\infty$. Other cases can be handled with by symmetry. More precisely, we need to prove the following limit for any given real number $c\in (0,1)$. 
\[
 \lim_{t\rightarrow +\infty} E_+(t; -\infty, ct) = 0.
\]
Fix a large time $t$ and let $d= (1-c)t/2$. Then for any $s \in [0,d]$ we may apply trapezoid law and obtain
\begin{align*}
 E_+(t; -\infty, ct) & \leq E_+(t, -\infty, ct+s) \\
 & = E_+(0;-\infty, -2d+s) + \frac{1}{p+1} \int_0^{t} |u(-2d+s+t', t')|^{p+1} dt'\\
 & \leq E_+(0;-\infty, -d)  + \frac{1}{p+1} \int_0^{t} |u(-2d+s+t', t')|^{p+1} dt'.
\end{align*}
Integrating this inequality for $s \in [0,d]$ we obtain
\begin{align*}
 d E_+(t; -\infty, ct) & \leq d E_+(0; -\infty, -d) + \frac{1}{p+1} \int_0^d \int_0^{t} |u(-2d+s+t', t')|^{p+1} dt' ds\\
 & = d E_+(0; -\infty, -d) + \frac{1}{p+1} \iint_{\Omega} |u(x, t')| dx dt' \\
 & \leq d E_+(0; -\infty, -d) + \frac{1}{p+1} \int_{0}^{t} \int_{-\infty}^\infty |u(x,t')|^{p+1} dx dt.
\end{align*}
Here the region $\Omega$ is a parallelogram with vertices $(-d,0)$, $(-2d,0)$, $(ct, t)$ and $(ct+d,t)$, as shown in figure \ref{figure energydis2}. Recalling $d = (1-c)t/2$ and dividing both sides of the inequality above by $d$, we have 
\[
 E_+(t; -\infty, ct) \leq E_+(0; -\infty, -\tfrac{1-c}{2}t) + \frac{2}{(p+1)(1-c)t} \int_0^{t} \int_{-\infty}^\infty |u(x,t')|^{p+1} dx dt'
\]
Finally we make $t\rightarrow +\infty$ and utilize the already known fact (see \cite{yang1D})
\[
 \lim_{t\rightarrow \infty} \|u(\cdot,t)\|_{L^{p+1}(\Rm)} = 0
\]
to finish the proof.

 \begin{figure}[h]
 \centering
 \includegraphics[scale=0.9]{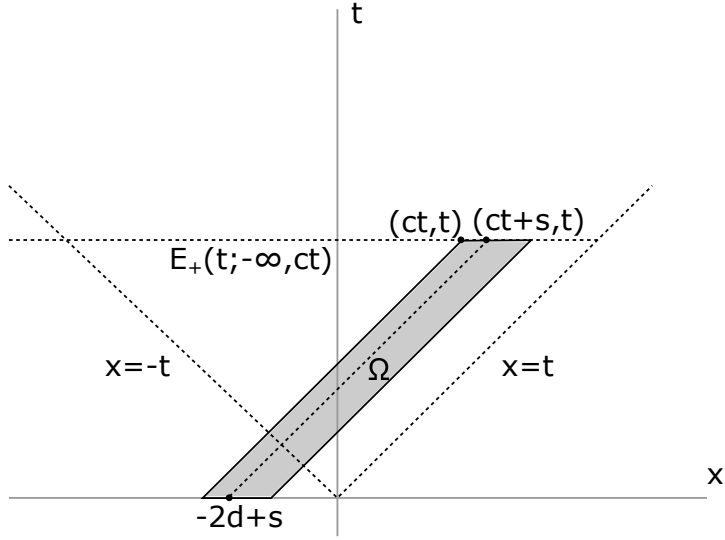}
 \caption{Illustration of proof} \label{figure energydis2}
\end{figure}

\section{Energy Retraction to Light Cones}
In this section we prove Proposition \ref{proposition energy retraction} and Corollary \ref{corollary focusing}. We start by introducing a lemma. 
\begin{lemma} \label{vanishing lemma}
 Let $u$ be a solution to \eqref{cp1} with a finite energy and $(x_0,t_0)$ be a point in $\Rm_x \times \Rm_t$. If $u(x_0+s, t_0+s) = 0$ for all $s \geq 0$, i.e. $u$ vanishes on the light ray starting at $(x_0,t_0)$, then we have $u(x,t) = 0$ for all $(x,t)$ with $x\geq |t-t_0|+x_0$. 
\end{lemma}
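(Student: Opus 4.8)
The plan is to track the \emph{left-going} energy density $e_-$, since that is the component which the hypothesis forces to vanish on the given ray. Write $\eta = t_0 - x_0$, so that the forward light ray is $\{(t-\eta, t) : t \geq t_0\}$ and the target region splits as $\{x \ge |t-t_0|+x_0\} = W_+ \cup W_-$ with
\[
 W_+ = \{(x,t): t\ge t_0,\ x \ge t-\eta\}, \qquad W_- = \{(x,t): t\le t_0,\ x \ge x_0 + (t_0 - t)\}.
\]
The first observation is that along the forward ray the tangential derivative of $u$ vanishes: since $u \equiv 0$ there and the ray is parallel to $(1,1)$, we get $u_x + u_t = (\partial_x + \partial_t) u = 0$ on the ray. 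Together with $u=0$ this gives $e_-(t-\eta, t) = \frac14|u_x+u_t|^2 + \frac{1}{2(p+1)}|u|^{p+1} = 0$ for every $t \ge t_0$, whereas $e_+$ need not vanish.

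First I would establish $u \equiv 0$ on $W_+$. Applying the trapezoid law (Proposition~\ref{trapezoid law}) for the left-going energy along the characteristic $x = t-\eta$ gives, for $t_0 \le t_1 \le t_2$,
\[
 E_-(t_1; t_1-\eta, \infty) - E_-(t_2; t_2-\eta, \infty) = \frac12 \int_{t_1}^{t_2} |u_x(t-\eta,t) + u_t(t-\eta,t)|^2\, dt = 0,
\]
because the integrand vanishes on the forward ray. Hence $t \mapsto E_-(t; t-\eta, \infty)$ is constant on $[t_0, \infty)$. To identify this constant as $0$ I would let $t \to +\infty$ and invoke Theorem~\ref{energy distribution}: for any fixed $c \in (0,1)$ one has $t-\eta \ge -ct$ once $t$ is large, so $E_-(t; t-\eta, \infty) \le E_-(t; -ct, \infty) \to 0$. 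Therefore $E_-(t; t-\eta,\infty) = 0$ for all $t \ge t_0$, which forces $e_- \equiv 0$ on $W_+$, and in particular $u \equiv 0$ there.

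It remains to cover the past region $W_-$. The vanishing of $u$ on $W_+$ yields zero Cauchy data on the spacelike segment $\{t=t_0,\ x \ge x_0\}$, i.e. $u(x,t_0)=0$ and $u_t(x,t_0)=0$ for $x \ge x_0$ (the latter because $u$ vanishes identically on the two-dimensional set $W_+$ lying above this segment). For any $(x_1,t_1)\in W_-$ the backward domain of dependence meets the slice $t=t_0$ in the interval $[x_1-(t_0-t_1),\, x_1+(t_0-t_1)]$, which is contained in $[x_0,\infty)$ precisely because $x_1 \ge x_0 + (t_0-t_1)$. Finite speed of propagation, applied time-reversed to the D'Alembert representation from the local theory, then gives $u(x_1,t_1)=0$. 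Combining the two regions yields $u\equiv 0$ on all of $\{x \ge |t-t_0|+x_0\}$.

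The conceptual heart of the argument is the second step: recognizing that the quantity frozen by the flux identity is exactly the left-going energy to the right of the ray, and that this constant must be $0$ because left-going energy is carried to the left and is therefore evacuated from any region whose left edge advances at the light speed --- which is precisely the content of Theorem~\ref{energy distribution}. The only remaining points are the usual regularity issues: the trapezoid law and finite speed of propagation are being applied to energy-class (non-$C^2$) solutions, but these are already handled by the smooth-approximation arguments underlying Proposition~\ref{energy flux formula} and the local well-posedness theory, so no genuinely new difficulty arises there.
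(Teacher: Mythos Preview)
Your argument is correct and follows essentially the same route as the paper's proof: use the trapezoid law for $E_-$ along the given characteristic, note the flux vanishes because $u_x+u_t=0$ on the ray, invoke Theorem~\ref{energy distribution} to identify the constant as zero, and finish by finite speed of propagation. The only cosmetic difference is that the paper works at the single time $t_0$ (deducing $(u,u_t)(\cdot,t_0)=0$ on $x>x_0$ directly from $E_-(t_0;x_0,\infty)=0$, then propagating both forward and backward), whereas you first sweep out all of $W_+$ and then go backward; the content is the same.
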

\begin{proof}
 For any $s_0>0$ we may apply trapezoid law and obtain 
 \[
  E_-(t_0; x_0, +\infty) - E_-(t_0+s_0; x_0+s_0, +\infty) = \frac{1}{2} \int_{0}^{s_0} |(u_x+u_t)(x_0+s, t_0+s)|^2 ds = 0.
 \]
 By Theorem \ref{energy distribution} we have 
 \[
  \lim_{s_0\rightarrow +\infty} E_-(t_0+s_0; x_0+s_0, +\infty) = 0. 
 \]
 Thus we obtain $E_-(t_0; x_0, +\infty) = 0$. This implies that $(u(t_0,x), u_t(t_0,x)) = (0,0)$ for all\footnote{$u_t=0$ in the sense of $L^2$ functions, or almost everywhere $x>x_0$.} $x > x_0$. The conclusion immediately follows because of the finite speed of propagation. 
\end{proof}
\begin{remark} \label{another ray}
 In the same manner we may prove the following result: if $u$ is a solution to \eqref{cp1} with a finite energy so that $u(x_0-s, t_0+s) = 0$ for all $s \geq 0$, then $u(x,t) = 0$ for all $(x,t)$ with $x\leq -|t-t_0|+x_0$.
\end{remark}
\paragraph{Proof of Proposition \ref{proposition energy retraction}} According to energy flux formula, the energy contained in the light cone 
 \[
  E_\eta(t) \doteq \int_{|x|<t-\eta} \left(\frac{1}{2}|u_x(x,t)|^2 + \frac{1}{2}|u_t(x,t)|^2 + \frac{1}{p+1}|u(x,t)|^{p+1}\right) dx
 \]
is an increasing function of $t$. Thus it suffices to show $E_\eta(t) > 0$ for at least one $t>\eta$. If this were false, we would have 
\begin{equation} \label{vanish in the cone}
 u(x,t) = 0, \quad \hbox{if}\;  |x|\leq t-\eta. 
\end{equation}
The identity above holds on the boundary $|x|=t-\eta$ as well because $u$ is a continuous function of $(x,t)\in \Rm^2$. Since $u$ vanishes on the light ray $x= t-\eta, t\geq \eta$, we may apply Lemma \ref{vanishing lemma} and conclude that 
\begin{equation} \label{vanish right}
 u(x,t) = 0, \quad \hbox{if}\; x\geq |t-\eta|.
\end{equation}
We also have $u$ vanishes on the light ray $x=\eta-t, t\geq \eta$. By Remark \ref{another ray} we obtain  
\begin{equation} \label{vanish left}
 u(x,t) = 0, \quad \hbox{if}\; x\leq -|t-\eta|.
\end{equation}
Combining \eqref{vanish in the cone}, \eqref{vanish right} and \eqref{vanish left},  we obtain $u(x,t)=0$ for all $x\in \Rm$ and $t \geq \eta$. This contradicts with our assumption that $u$ is a nonzero solution. 

\paragraph{Application on focusing equation} At the end of this section we prove Corollary \ref{corollary focusing}. Assume $u(y,s)$ solves $u_{ss} - u_{yy} = |u|^{p-1} u$. If neither (i) nor (ii) held, then the solution $u(y,s)$ would be defined for all $s>0$ so that 
\begin{equation}
 \liminf_{s \rightarrow +\infty} \|(u(\cdot,s), u_s(\cdot,s))\|_{\dot{H}^1\times L^2} < +\infty.  \label{bounded H1L2}
\end{equation}
Since the initial data are compactly-supported, we may find a real number $R>0$, so that $\hbox{Supp} (u_0,u_1)\subset (-R,R)$. By finite speed of propagation, we have
\begin{equation}
 u(y,s) = 0, \quad \hbox{if}\; |y|\geq s + R, \, s\geq 0. \label{finite support}
\end{equation}
In addition, because the initial data are assumed to be smooth, the solution $u$ is at least a $C^2$ solution defined in a neighbourhood of $\Rm_y \times [0,+\infty)$. The idea is to consider the $C^2$ solution $v(x,t) = u(t,x)$ to the defocusing equation defined in a neighbourhood of $[0,+\infty) \times \Rm_t$ and to apply the left/right-going energy theory. We first need to determine whether $v$ still comes with a finite energy. Let us fix a function
\[
 a(x) = \left\{\begin{array}{ll} y, & |y| \leq R; \\ R, & |y| \geq R; \end{array}\right.
\]
and define 
\[
 I(s) = \int_{-\infty}^\infty a(y) u_y(y,s) u_s(y,s) dy.
\]
A straight-forward calculation shows that
\begin{align*}
 I'(s) & = \int_{-\infty}^\infty [a(y) u_{ys}(y,s) u_s(y,s) + a(y) u_y(y,s) u_{ss}(y,s)] dy\\
 & = \int_{-\infty}^\infty [a(y) u_{ys}(y,s) u_s(y,s) + a(y) u_y(y,s) u_{yy}(y,s) + a(y) u_y(y,s) |u|^{p-1} u(y,s)] dy\\
 & = \int_{-\infty}^\infty a(y) \partial_y \left[\frac{1}{2} u_s^2(y,s) + \frac{1}{2} u_y^2(y,s) + \frac{1}{p+1} |u(y,s)|^{p+1} \right] dy\\
 & = - \int_{-R}^R \left[\frac{1}{2} u_s^2(y,s) + \frac{1}{2} u_y^2(y,s) + \frac{1}{p+1} |u(y,s)|^{p+1} \right] dy.
\end{align*} 
Observing the fact $|I(s)| \leq (R/2) \|(u(\cdot,s), u_s(\cdot,s))\|_{\dot{H}^1\times L^2}^2$, we may integrate $I'(s)$ from $0$ to $S$ and obtain an inequality 
\begin{align*}
 \int_0^S \int_{-R}^R & \left[\frac{1}{2} u_s^2(y,s) + \frac{1}{2}u_y^2(y,s) + \frac{1}{p+1} |u(y,s)|^{p+1} \right] dy ds\\
 & \qquad \qquad \leq \frac{R}{2}\left(\|(u_0, u_1)\|_{\dot{H}^1\times L^2}^2 + \|(u(\cdot,S), u_s(\cdot,S))\|_{\dot{H}^1\times L^2}^2\right).
\end{align*}
We recall \eqref{bounded H1L2}, make $S\rightarrow +\infty$ and obtain
\[
 \int_0^{+\infty} \int_{-R}^R  \left[\frac{1}{2} u_s^2(y,s) + \frac{1}{2}u_y^2(y,s) + \frac{1}{p+1} |u(y,s)|^{p+1} \right] dy ds < + \infty. 
\]
This implies that the solution $v(x,t) = u(t,x)$ to the defocusing equation satisfies 
\begin{equation} \label{bounded energy v}
  \int_{-R}^R \int_0^{+\infty} \left[\frac{1}{2} v_x^2(x,t) + \frac{1}{2}v_t^2(x,t) + \frac{1}{p+1} |v(x,t)|^{p+1} \right] dx dt < + \infty. 
\end{equation}
Thus there exists a time $t_0 \in (-R,R)$ so that $E(t_0; 0, +\infty) < +\infty$. Here the energy in the right half line $E(t; 0, +\infty)$ is defined by
\[
 E(t; 0, +\infty) = \int_0^{+\infty} \left[\frac{1}{2} v_x^2(x,t) + \frac{1}{2}v_t^2(x,t) + \frac{1}{p+1} |v(x,t)|^{p+1} \right] dx.
\]
Next we verify $E(t; 0, +\infty) < +\infty$ for all $t\in [-R,R]$. It suffices to prove this inequality for all $t_1\in (t_0,R]$ by symmetry. We apply energy flux formula of full energy\footnote{This can be obtained by combining left/right-going energies together. } on the rectangle $[0,r]\times [t_0,t_1]$ and write
\begin{align*}
 E(t_0; 0,r) + \int_{t_0}^{t_1} v_x(r,t) v_t(r,t) dt - E(t_1; 0, r) - \int_{t_0}^{t_1} v_x(0,t) v_t(0,t) dt = 0. 
\end{align*} 
Thanks to \eqref{bounded energy v}, we may let $r\rightarrow +\infty$ in the identity above and conclude 
\[
 E(t_1; 0, +\infty) = E(t_0; 0, +\infty) - \int_{t_0}^{t_1} u_0' (y) u_1 (y) dy < + \infty.
\]
We may also rewrite \eqref{finite support} in term of $v$: 
\begin{equation} \label{vanishing 1}
 v(x,t) = 0, \quad \hbox{if}\; 0\leq x \leq |t|-R, \; |t| \geq R.
\end{equation}
We next combine this fact with $E(R; 0, +\infty) < +\infty$, apply Lemma \ref{vanishing lemma} and conclude
\begin{equation} \label{vanishing 2}
 v(x,t) = 0, \quad \hbox{if}\; x\geq |t-R|.
\end{equation}
Please note that although the solution $v$ is not necessarily defined for $x<0$, we are still able to apply Lemma \ref{vanishing lemma} here. Since we may consider the solution $\tilde{v}$ to $v_{tt} - v_{xx} = -|v|^{p-1}v$ with initial data $(\tilde{v}_0, \tilde{v}_1)\in (\dot{H}^1 \cap L^{p+1})\times L^2(\Rm)$ at time $t=R$, so that $(\tilde{v}_0(x), \tilde{v}_1(x)) = (v(x,R), v_t(x,R))$ for all $x \geq 0$. By finite speed of propagation we have $v(x,t) = \tilde{v}(x,t)$ if $x\geq |t-R|$. Thus identity $\tilde{v}(x,x+R)=0$ holds for all $x\geq 0$. This enables us to apply Lemma \ref{vanishing lemma} on $\tilde{v}$ and conclude $\tilde{v}(x,t) = 0$ if $x \geq |t-R|$. This immediately verifies \eqref{vanishing 2} because $v$ and $\tilde{v}$ coincide in this region. In the same manner we also have 
\begin{equation} \label{vanishing 3}
 v(x,t) = 0, \quad \hbox{if}\; x\geq |t+R|.
\end{equation}
Combining \eqref{vanishing 1}, \eqref{vanishing 2}, \eqref{vanishing 3} we obtain $v(x,t) = 0$ for any $x\geq R, t\in \Rm$. This means $u(y,s)=0$ for all $y\in \Rm$, $s\geq R$ and gives a contradiction. For readers' convenience, the regions involved in this proof are illustrated in figure \ref{figure switch}. The solution $v(x,t)$ vanishes in the lighter grey regions by finite speed of propagation, in other grey regions by Lemma \ref{vanishing lemma}. 

 \begin{figure}[h]
 \centering
 \includegraphics[scale=0.9]{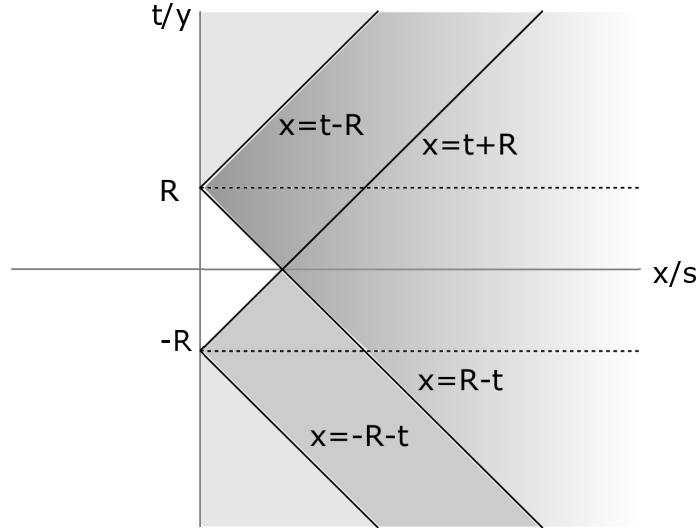}
 \caption{Illustration of regions} \label{figure switch}
\end{figure}

\section{Energy Concentration}  

In this section we prove Proposition \ref{prop interaction}. Let us define 
\[
 Q(t) = \iint_{\Rm^2} |x_1-x_2| e_+(x_1,t) e_+(x_2,t) dx_1 dx_2. 
\]
For convenience we also use the notation 
\[
 e'_+(x,t) = -\frac{1}{4}|u_x(x,t)-u_t(x,t)|^2 + \frac{1}{2(p+1)}|u(x,t)|^{p+1}.
\]
Keeping $\partial_t e_+(x,t) = \partial_x e'_+(x,t)$ in mind, we may calculate 
\begin{align*}
 Q'(t) & = \iint_{\Rm^2} |x_1-x_2| \left[\partial_{x_1} e'_+(x_1,t) e_+(x_2,t) + e_+(x_1,t) \partial_{x_2} e'_+(x_2,t)\right] dx_1 dx_2 \\
 & = \iint_{\Rm^2} \frac{x_1-x_2}{|x_1-x_2|} \left[e_+(x_1,t) e'_+(x_2,t) - e'_+(x_1,t) e_+(x_2,t)\right] dx_1 dx_2\\
 & = \frac{1}{2(p+1)} \iint_{\Rm^2} \frac{x_1-x_2}{|x_1-x_2|} |u_x(x_1,t)-u_t(x_1,t)|^2 |u(x_2,t)|^{p+1} dx_1 dx_2
\end{align*}
We have assumed that the initial data are even, thus $u(x,t) = u(-x,t)$ holds for all $(x,t) \in \Rm^2$. Therefore if $x_1>0$, we have 
\begin{align*}
 \int_\Rm  \frac{x_1-x_2}{|x_1-x_2|} |u(x_2,t)|^{p+1} dx_2 & = \int_{-\infty}^{x_1} |u(x_2)|^{p+1} dx_2 - \int_{x_1}^\infty |u(x_2)|^{p+1} dx_2 \\
 & = \int_{-x_1}^{x_1} |u(x_2)|^{p+1} dx_2 \geq 0.
\end{align*}
Similarly if $x_1<0$, we have 
\[
 \int_\Rm  \frac{x_1-x_2}{|x_1-x_2|} |u(x_2,t)|^{p+1} dx_2 = - \int_{x_1}^{-x_1} |u(x_2)|^{p+1} dx_2 \leq 0.
\]
By Theorem \ref{energy distribution}, there exists a time $T>1$, so that when $t\geq T$, we always have 
\begin{equation} \label{upper bound e1}
 \int_{t/2}^\infty e_+(x_2,t) dx_2 > E_+/2 \quad \Rightarrow \quad \int_{-\infty}^0 e_+(x_1,t) dx_1 \leq \frac{4Q(t)}{E_+ t} 
\end{equation}
By Symmetry we also have 
\begin{equation} \label{upper bound Lp1}
 \int_\Rm |u(x_2)|^{p+1} dx_2 = 2 \int_{-\infty}^0 |u(x_2)|^{p+1} dx_2  \leq 4(p+1) \int_{-\infty}^0 e_+(x_2,t) dx_2 \leq \frac{16(p+1) Q(t)}{E_+ t}. 
\end{equation}
Thus when $t \geq T$, $Q(t)$ satisfies an inequality 
\begin{align*}
 Q'(t) & =  \frac{1}{2(p+1)} \int_{\Rm} \left[|u_x(x_1,t)-u_t(x_1,t)|^2 \int_{\Rm} \frac{x_1-x_2}{|x_1-x_2|} |u(x_2,t)|^{p+1} dx_2 \right] dx_1\\
&  \geq -\frac{1}{2(p+1)} \int_{-\infty}^0 \left[|u_x(x_1,t)-u_t(x_1,t)|^2  \int_{x_1}^{-x_1} |u(x_2,t)|^{p+1} dx_2\right] dx_1\\
& \geq -\frac{8 Q(t)}{E_+ t} \int_{-\infty}^0 |u_x(x_1,t)-u_t(x_1,t)|^2 dx_1\\
& \geq -\frac{128 Q(t)^2}{E_+^2 t^2}. 
\end{align*}
Here in the final step we use inequality \eqref{upper bound e1}. This immediately gives ($t\geq T$)
\[
 [1/Q(t)]' \leq \frac{128}{E_+^2 t^2} \quad \Rightarrow \quad \frac{1}{Q(t)} \leq \frac{1}{Q(T)} + \frac{128}{E_+^2 T} < + \infty, 
\]
thus finishes the proof.

\section{Appendix}
In this final section we give some hints about the retraction conjecture. We first prove 
\begin{lemma}
 Let $u$ be a solution to \eqref{cp1} with a finite energy. Given any $\eta \in \Rm$, the function $u_x(t+s,t)-u_t(t+s,t)$ converges weakly to zero in $L_s^2((-\eta,+\infty))$. 
\end{lemma}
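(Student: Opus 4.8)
The plan is to establish the weak limit by reducing to a dense class of test functions and then splitting $u_x-u_t$ into a piece that integrates by parts cleanly and a piece controlled by the energy distribution theorem.

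First I would record a uniform bound that legitimizes the density reduction. Since $e_+(x,t)\ge \frac14|u_x-u_t|^2$ and $\int_\Rm e_+(x,t)\,dx=E_+$ is constant in $t$, the substitution $x=t+s$ gives
\[
\int_{-\eta}^{+\infty}|u_x(t+s,t)-u_t(t+s,t)|^2\,ds=\int_{t-\eta}^{+\infty}|u_x-u_t|^2\,dx\le 4E_+
\]
for every $t$. Thus the family $\{u_x(t+\cdot,t)-u_t(t+\cdot,t)\}_t$ is bounded in $L^2_s((-\eta,+\infty))$, and a standard density argument (a bounded family in a Hilbert space converges weakly to zero as soon as the pairing converges on a dense subset) reduces the problem to showing $\int_{-\eta}^{+\infty}(u_x-u_t)(t+s,t)\phi(s)\,ds\to 0$ for every $\phi\in C_c^\infty((-\eta,+\infty))$.

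The algebraic heart is the identity $u_x-u_t=2u_x-(u_x+u_t)$. For the first piece, for fixed $t$ one has $u_x(t+s,t)=\partial_s[u(t+s,t)]$, so integrating by parts (the boundary terms vanish because $\phi$ is supported inside the open interval) gives
\[
\int_{-\eta}^{+\infty}u_x(t+s,t)\phi(s)\,ds=-\int_{-\eta}^{+\infty}u(t+s,t)\phi'(s)\,ds,
\]
which is bounded by $\|u(\cdot,t)\|_{L^{p+1}(\Rm)}\|\phi'\|_{L^{(p+1)/p}}$ and hence tends to zero by the known decay $\|u(\cdot,t)\|_{L^{p+1}}\to 0$. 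For the second piece I would show the left-going field converges \emph{strongly}: using $e_-\ge\frac14|u_x+u_t|^2$ and fixing any $c\in(0,1)$, for $t$ large enough $t-\eta>-ct$, so
\[
\int_{t-\eta}^{+\infty}|u_x+u_t|^2\,dx\le 4\int_{-ct}^{+\infty}e_-(x,t)\,dx\longrightarrow 0
\]
by Theorem~\ref{energy distribution}. Cauchy--Schwarz then yields $\big|\int_{-\eta}^{+\infty}(u_x+u_t)(t+s,t)\phi(s)\,ds\big|\le \|u_x+u_t\|_{L^2(t-\eta,\infty)}\,\|\phi\|_{L^2}\to 0$, and adding the two pieces finishes the argument.

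The step I expect to be the main obstacle is the $u_t$ contribution: along the line $x=t+s$ only the tangential derivative $\partial_s=u_x$ is directly accessible, so $u_t$ resists integration by parts. The decomposition above is precisely what sidesteps this, rerouting the transverse derivative into the left-going field, whose strong decay near the forward light cone is exactly the content of the energy distribution theorem. It is worth noting that the vanishing of the $u_x$ term is where the defocusing nonlinearity is genuinely used—for a free wave $\|u(\cdot,t)\|_{L^{p+1}}$ does not decay and that term would survive. Finally, for merely finite-energy (non-$C^2$) solutions the change of variables and integration by parts should be justified by the same smooth-approximation procedure already invoked for the energy flux formula.
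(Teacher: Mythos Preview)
Your proof is correct and follows essentially the same route as the paper: split $u_x-u_t=2u_x-(u_x+u_t)$, kill the left-going piece by the strong decay $E_-(t;t-\eta,+\infty)\to 0$ from Theorem~\ref{energy distribution}, and integrate the $u_x$ piece by parts against $\phi\in C_c^\infty$ to reduce to the known decay $\|u(\cdot,t)\|_{L^{p+1}}\to 0$. The paper presents these steps in a slightly different order (it disposes of $u_x+u_t$ first and then invokes the uniform bound), but the ingredients and the logic are identical.
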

\begin{proof}
 First of all, Theorem \ref{energy distribution} implies that $E_-(t; t-\eta, +\infty) \rightarrow 0$ as $t\rightarrow +\infty$. Thus $u_x(t+s,t)+u_t(t+s,t)$ converges strongly to zero in $L_s^2((-\eta,+\infty))$. As a consequence it suffices to show $u_x(t+s,t)$ converges weakly to zero in $L_s^2((-\eta,+\infty))$. By energy conservation law, the norms $\|u_x(t+s,t)\|_{L_s^2}((-\eta,+\infty))$ are uniformly bounded. Thus we only need to show that for any smooth and compactly-supported function $f(s) \in C_0^\infty((-\eta, +\infty))$, we have
 \[
  \lim_{t\rightarrow +\infty} \int_{-\eta}^\infty u_x(t+s,t) f(s) ds =0.
 \]
 This immediately follows an integration by parts 
 \[
  \int_{-\eta}^\infty u_x(t+s,t) f(s) ds = - \int_{-\eta}^\infty u(t+s,t) f'(s) ds \rightarrow 0.
 \]
 In the final step we use the fact $\|u(t+\cdot,t)\|_{L^{p+1}((-\eta, +\infty))}\leq \|u(\cdot,t)\|_{L^{p+1}(\Rm)} \rightarrow 0$. 
\end{proof}

\paragraph{Self-similar solutions} Next we consider a family of self-similar solutions in the form of $u(x,t) = x^{-2/(p-1)} f(t/x)$. Here $x>t\geq 0$. For convenience we use the notation $\beta = 2/(p-1)$. A simple calculation shows 
\begin{align*}
 u_x (x,t) & = -\beta x^{-\beta-1} f(t/x) - t x^{-\beta -2} f'(t/x); \\
 u_{xx} (x,t) & = \beta (\beta+1) x^{-\beta-2} f(t/x) + 2(\beta+1) t x^{-\beta -3} f'(t/x) + t^2 x^{-\beta -4} f''(t/x); \\
 u_{tt} (x,t) & = x^{-\beta-2} f''(t/x).
\end{align*}
Thus $u$ solves \eqref{cp1} if and only if $f(y)$ solves the ordinary differential equation
\begin{equation} \label{ode}
 (1-y^2) f''(y) - 2(\beta+1) y f'(y) - \beta(\beta+1) f(y) + |f(y)|^{p-1} f(y) = 0.
\end{equation}
The following lemma gives some properties of solutions to this ordinary differential equation. The proof of this lemma is postponed to the final part of this section. 
\begin{lemma} \label{lemma ode}
 The ordinary differential equation
\[
 \left\{\begin{array}{ll} (1-y^2) f''(y) - 2(\beta+1) y f'(y) - \beta(\beta+1) f(y) + |f(y)|^{p-1} f(y) = 0, & y \in (-1,1);\\
 f(0) = a, \; f'(0) =b & \end{array}\right.
\]
has a unique solution $f \in C^2((-1,1))$ for any initial data $(a,b)\in \Rm^2$. In addition, we always have
\[
 \lim_{y\rightarrow 1^-} (1-y)^{1+\beta} f'(y) = 0.
\]
\end{lemma}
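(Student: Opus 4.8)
The plan is to treat the two assertions separately: first global existence and uniqueness of a $C^2$ solution on $(-1,1)$, then the boundary behaviour at $y=1$. The endpoint $y\to-1^+$ needs no separate treatment, since the equation is invariant under $y\mapsto -y$.

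For existence and uniqueness I would put the equation in normal form
\[
 f''(y)=\frac{2(\beta+1)y f'(y)+\beta(\beta+1)f(y)-|f(y)|^{p-1}f(y)}{1-y^2},
\]
whose right-hand side is locally Lipschitz in $(f,f')$ on $(-1,1)\times\Rm^2$ (the map $f\mapsto|f|^{p-1}f$ is $C^1$ since $p>1$, and $1-y^2$ is bounded away from $0$ on any compact subinterval). Picard--Lindel\"of gives a unique local $C^2$ solution near $y=0$, and the only obstruction to continuing it up to the singular endpoints is a blow-up of $|f|+|f'|$ at some interior $y_0\in(-1,1)$. To rule this out I would use the energy
\[
 \mathcal{E}(y)=\tfrac12(1-y^2)\,(f'(y))^2+\tfrac1{p+1}|f(y)|^{p+1}-\tfrac{\beta(\beta+1)}2 f(y)^2,
\]
for which the equation yields the clean identity $\mathcal{E}'(y)=(2\beta+1)\,y\,(f'(y))^2$. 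Because $p+1>2$, Young's inequality gives a constant $C_0$ with $\mathcal{E}(y)+C_0\ge \tfrac12(1-y^2)(f')^2+\tfrac1{2(p+1)}|f|^{p+1}\ge 0$, so on $(0,1)$ one has $(\mathcal{E}+C_0)'\le \frac{2(2\beta+1)y}{1-y^2}(\mathcal{E}+C_0)$. Gr\"onwall's lemma then gives $\mathcal{E}(y)+C_0\le(\mathcal{E}(0)+C_0)(1-y^2)^{-(2\beta+1)}$, which is finite on every compact subinterval and bounds both $f$ and $(1-y^2)^{1/2}f'$ there; hence the solution extends to all of $(-1,1)$.

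For the boundary limit I would set $g(y):=(1-y^2)^{\beta+1}f'(y)$, so that the target quantity is $(1-y)^{1+\beta}f'(y)=(1+y)^{-(\beta+1)}g(y)$, and the claim reduces to $\lim_{y\to1^-}g(y)=0$. The energy bound already gives $|g(y)|\le C$ on $(0,1)$, and $g'(y)=(1-y^2)^\beta f(y)\big(\beta(\beta+1)-|f(y)|^{p-1}\big)$. The heart of the matter is a sharp growth rate for $f$ near $y=1$. A convenient reformulation is $y=\tanh\tau$, which sends the endpoint to $\tau=+\infty$ and turns the equation into
\[
 \ddot f-2\beta\tanh\tau\,\dot f-\beta(\beta+1)\,\mathrm{sech}^2\tau\, f+\mathrm{sech}^2\tau\,|f|^{p-1}f=0,
\]
autonomous at infinity with linear modes $1$ and $e^{2\beta\tau}$ (corresponding to $f$ bounded and to $f\sim(1-y)^{-\beta}$). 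The defocusing sign excludes the fast mode: if $f$ grew like $e^{2\beta\tau}$ the nonlinear term $\mathrm{sech}^2\tau\,|f|^{p-1}f$ would dominate at order $e^{2(\beta+1)\tau}\gg e^{2\beta\tau}$, which is impossible. One is thus left with growth at most like the balanced nonlinear mode $e^{\beta\tau}$, i.e.\ $|f|\lesssim(1-y)^{-\beta/2}$. With this rate $g'(y)=O\big((1-y)^{\beta/2-1}\big)$ is integrable near $y=1$, so $g$ has a finite limit $L$; and $L\ne0$ is impossible because then $f'(y)\sim L\,2^{-(\beta+1)}(1-y)^{-(\beta+1)}$ would force $f\sim c\,(1-y)^{-\beta}$, the excluded fast mode, whose nonlinear term drives $g'$ to be non-integrable of one sign and hence $|g|\to\infty$, contradicting boundedness. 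Therefore $L=0$, giving the stated limit.

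I expect the endpoint analysis of the third step to be the main obstacle. The energy estimate of the second step is genuinely too lossy here: it only yields $|f|\lesssim(1-y)^{-(2\beta+1)/(p+1)}$, an exponent that exceeds the threshold $(\beta+1)/p$ needed for $g'$ to be integrable, so one cannot simply conclude that $g$ converges. Upgrading this to the sharp rate $|f|\lesssim(1-y)^{-\beta/2}$ — equivalently, ruling out the fast linear mode of the $\tau$-equation in the presence of the growing factor $|f|^{p-1}$ — is the delicate point, and I would expect it to require a barrier or invariant-manifold argument exploiting the defocusing sign rather than the soft energy inequality that suffices for mere global existence.
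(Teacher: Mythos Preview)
Your treatment of global existence is fine and matches the paper's: the two energies are related by $\tilde{E}(y)=(1-y^2)^{2\beta+1}(\mathcal{E}(y)+C_p)$, and your Gr\"onwall bound is exactly the paper's monotonicity $\tilde{E}(y)\le\tilde{E}(0)$.

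The gap is in the boundary limit. Your entire argument for $g(y)\to 0$ rests on the sharp growth rate $|f(y)|\lesssim(1-y)^{-\beta/2}$, and you do not prove it. What you offer is a formal scaling observation: that $f\sim e^{2\beta\tau}$ is inconsistent with the size of the nonlinear term, and that $e^{\beta\tau}$ is the balanced exponent. This is not a proof. Nothing in your outline excludes, for instance, solutions that oscillate with amplitude on the order of $(1-y)^{-(2\beta+1)/(p+1)}$ (which your own energy bound permits), and with that amplitude $g'$ is genuinely non-integrable, as you yourself compute. The phrase ``I would expect it to require a barrier or invariant-manifold argument'' is an honest acknowledgement that the key step is missing; without it, the subsequent dichotomy ``$L=0$ or $L\ne 0$'' never gets off the ground, because you cannot even assert that $g$ has a limit.

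The paper's proof avoids the sharp rate entirely and is quite different in spirit. It works directly with the monotone quantity $\tilde{E}(y)=\tfrac12(1-y^2)^{2\beta+2}|f'|^2+(1-y^2)^{2\beta+1}P(f)$, which is nonnegative and decreasing on $[0,1)$, and shows that its limit $A$ must vanish by contradiction. If $A>0$, then at every point near $y=1$ at least one of $|f'|\gtrsim(1-y)^{-\beta-1}$ or $|f|\gtrsim(1-y)^{-(2\beta+1)/(p+1)}$ holds with a uniform constant; from this and the first-integral identity $\frac{d}{dy}[(1-y^2)^{\beta+1}f']=-(1-y^2)^{\beta}P'(f)$ one shows that every interval of length $C_0(1-y)^{(p+2)/(p+1)}$ must contain a zero of $f$ or of $f'$. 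Picking two such zeros at dyadically comparable distances from $1$ and using the identity $\frac{d}{dy}[(1-y^2)^{\beta+1}f'f]=(1-y^2)^{\beta+1}(f')^2-(1-y^2)^{\beta}P'(f)f$ between them, one extracts a uniform energy drop $\tilde{E}(y_1)-\tilde{E}(y_2)\gtrsim A$, contradicting $\tilde{E}\to A$. Once $A=0$, the conclusion $(1-y)^{\beta+1}f'\to 0$ is immediate from the definition of $\tilde{E}$. This ``zeros are dense $+$ uniform energy loss on dyadic scales'' mechanism replaces the sharp pointwise bound you were seeking and is the idea your proposal is missing.
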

\paragraph{Asymptotic behaviour} The lemma gives the existence of self-similar solutions $u$ for any initial data $(u_0,u_1) = (a x^{-\beta}, b x^{-\beta-1})$. We show that given any $R>0$, the right-going energy of these solutions on the right hand of light ray $x = t+ R$ converges to zero when $t\rightarrow +\infty$. Namely 
\[
 \lim_{t\rightarrow +\infty} E_+(t; t+R, +\infty) = 0. 
\]
By the already known fact $E_-(t; t+R, +\infty) \rightarrow 0$ and Remark \ref{tail is small}, it suffices to show 
\[
 \lim_{t\rightarrow +\infty} \int_{t+R}^{t+R_1} |u_t(x,t)|^2 dx =0, \quad \forall R_1>R>0.
\]
We rewrite the integral above in term of $f$ and utilize the asymptotic behaviour of $f'(y)$:
\begin{align*}
 \int_{t+R}^{t+R_1} |u_t(x,t)|^2 dx & = \int_{t+R}^{t+R_1} \left|x^{-\beta-1} f'(t/x)\right|^2 dx = \int_{t+R}^{t+R_1} x^{-2\beta-2} \left(1-\frac{t}{x}\right)^{-2\beta-2} o(1) dx\\
 & \leq \int_{t+R}^{t+R_1} (x-t)^{-2\beta-2} o(1) dx \leq (R_1-R) R^{-2\beta-2} o(1) \rightarrow 0.
\end{align*}
\paragraph{Proof of lemma} Finally we sketch a proof of Lemma \ref{lemma ode}. Because a lot of theories and techniques below are standard in the study of ordinary differential equations, we will skip some details. By standard ODE theory, the solution $f$ must be unique, $C^2$ and defined in a maximal interval $(-\delta_1, \delta_2)$. In addition, if $\delta_i<1$, then either $f(y)$ or $f'(y)$ would blow up as $y\rightarrow \delta_i$. This blow-up can never happen because of a semi-conservation law. Let us define 
\begin{align*}
 P(z) & = C_p - \frac{\beta(\beta+1)}{2} z^2 + \frac{1}{p+1} |z|^{p+1}; \\
 \tilde{E} (y) & = \frac{1}{2} (1-y^2)^{2\beta+2} |f'(y)|^2 + (1-y^2)^{2\beta+1} P(f(y)).
\end{align*}
Here $C_p$ is a positive constant determined by $p$ so that we always have
\begin{equation} \label{lower bound of P}
 P(z) \geq \frac{1}{p+2}|z|^{p+1} \geq 0
\end{equation}
A simple calculation shows that 
\begin{equation} \label{energy decay rate}
 \tilde{E}'(y) = -2(2\beta+1)y(1-y^2)^{2\beta} P(f(y)).
\end{equation}
Thus we have $\tilde{E}(y) \leq \tilde{E}(0) < +\infty$ for all $y$ in the maximal lifespan. This gives the boundedness of $f(y)$ and $f'(y)$ in any closed interval $I \subset (-1,1)$, thus guarantees the existence of solutions in the whole interval $(-1,1)$. This semi-conservation law also implies 
\begin{align}
 &|f'(y)| \lesssim (1-y^2)^{-\beta-1};& &|f(y)| \lesssim (1-y^2)^{-\frac{2\beta+1}{p+1}}.& \label{boundedness of f}
\end{align}
Now let us consider the asymptotic behaviour of $f$ as $y\rightarrow 1^-$. Because $\tilde{E}'(y)$ is a nonnegative, decreasing function of $y\in [0,1)$, the limit \[
 A \doteq \lim_{y \rightarrow 1^-} \tilde{E}(y) \geq 0
\] 
always exists. It suffices to show $A=0$. We prove this by a contradiction. If $A>0$, then we would have

\paragraph{Zeros are dense} We claim that there exist constants $C_0>0$ and $y_0\in (0,1)$, so that for any $y'\in (y_0,1)$, the interval $I(y') = (y', y'+C_0(1-y')^\frac{p+2}{p+1})$ contains either a zero of $f(y)$ or a zero of $f'(y)$. In fact, if the interval contained neither a zero of $f(y)$ or $f'(y)$, then $f(y)$ and $f'(y)$ could never change their signs. As a result, we have 
 \[
  \int_{I(y')} |f'(y)| dy = |f(y') - f(y'+C_0(1-y')^\frac{p+2}{p+1})| \lesssim (1-y'^2)^{-\frac{2\beta+1}{p+1}}. 
 \]
 Here we use the upper bound given in \eqref{boundedness of f}. Please note that the implicit constants associated with $\lesssim$ here (and in the argument below) do not depend on $C_0$, but the inequalities hold only if $y'>y_0(f, C_0)$ is sufficiently large. Similarly we may use the identity 
 \[
  \frac{d}{dy} \left[(1-y^2)^{\beta+1} f'(y)\right] = -(1-y^2)^\beta P'(f(y)),
 \]
 and obtain 
 \[
  \left|\int_{I(y')} (1-y^2)^\beta P'(f(y)) dy \right| \lesssim 1 \Rightarrow \int_{I(y')} (1-y^2)^\beta |f(y)|^{p} dy \lesssim 1. 
 \]
 We may ignore the integral of $(1-y^2)^\beta f(y)$ because 
 \[
  \int_{I(y')} (1-y^2)^\beta |f(y)| dy \lesssim  C_0 (1-y'^2)^{\beta - \frac{2\beta+1}{p+1}+\frac{p+2}{p+1}} = C_0 (1-y'^2)^{\frac{p+3}{p+1}} \lesssim 1. 
 \]
 In summary we have
 \[
   \int_{I(y')}  \left[(1-y'^2)^{\frac{2\beta+1}{p+1}} |f'(y)| + (1-y^2)^\beta |f(y)|^{p}\right] dy \lesssim 1.
 \]
 On the other hand, our assumption on the limit of $\tilde{E}(t)$ guarantees that either $|f'(y)|\gtrsim (1-y^2)^{-\beta-1}$ or $|f(y)| \gtrsim  (1-y^2)^{-\frac{2\beta+1}{p+1}}$ holds. Thus the integrand above satisfies
 \[
  (1-y'^2)^{\frac{2\beta+1}{p+1}} |f'(y)| + (1-y^2)^\beta |f(y)|^{p} \gtrsim (1-y'^2)^{-\frac{p+2}{p+1}}. 
 \] 
This gives a contradiction when $C_0$ is sufficiently large since $|I(y')| = C_0 (1-y'^2)^{\frac{p+2}{p+1}}$.

\paragraph{Loss of energy} Since the zeros are dense when $y \rightarrow 1^-$, given any $y_0 \in (1/2,1)$ we may find two numbers $y_1, y_2 \in (y_0,1)$ so that $1-y_1 \approx 2(1-y_2)$ and $f(y_1) f'(y_1) = f(y_2) f'(y_2) = 0$. By the identity 
\[
 \frac{d}{dy} \left[(1-y^2)^{\beta+1} f'(y)f(y)\right] = (1-y^2)^{\beta+1} |f'(y)|^2  -(1-y^2)^\beta P'(f(y)) f(y),
\]
we have
\[
 \int_{y_1}^{y_2} (1-y^2)^{\beta+1} |f'(y)|^2 dy = \int_{y_1}^{y_2} (1-y^2)^\beta P'(f(y)) f(y) dy \leq \int_{y_1}^{y_2} (1-y^2)^\beta |f(y)|^{p+1} dy. 
\]
Since $(1-y^2)$ are comparable to each other for all $y \in [y_1,y_2]$, we have 
\[
 \int_{y_1}^{y_2} (1-y^2)^{2\beta+2} |f'(y)|^2 dy \lesssim \int_{y_1}^{y_2} (1-y^2)^{2\beta+1} |f(y)|^{p+1} dy.
\]
The implicit constants associated to $\lesssim$ here (and in the argument below) depend on nothing but $p$. Thus we have 
\begin{align*}
 (y_2-y_1)A & \leq \int_{y_1}^{y_2} \tilde{E}(y) dy\\
  & \leq \int_{y_1}^{y_2} \left[\frac{1}{2} (1-y^2)^{2\beta+2} |f'(y)|^2 + (1-y^2)^{2\beta+1} \left(C_p+\frac{1}{p+1}|f(y)|^{p+1}\right)\right] dy\\
 & \lesssim (y_2-y_1) (1-y_1^2)^{2\beta+1} + \int_{y_1}^{y_2} (1-y^2)^{2\beta+1} |f(y)|^{p+1} dy.
\end{align*}
Please note that $y_2-y_1 \approx 1-y_2 \approx (1-y_1)/2$ by our assumption on $y_1,y_2$. In addition, when $y_1, y_2$ are sufficiently close to $1$, we have $(1-y_1^2)^{2\beta+1} \ll A$. Thus 
\[
 \int_{y_1}^{y_2} (1-y^2)^{2\beta} |f(y)|^{p+1} dy \gtrsim A. 
\]
Combining this with \eqref{lower bound of P} and \eqref{energy decay rate}, we obtain 
\[
 \tilde{E}(y_1) - \tilde{E}(y_2) = \int_{y_1}^{y_2} 2(2\beta+1)y(1-y^2)^{2\beta} P(f(y)) dy \gtrsim A. 
\]
This can never happen when $y_1,y_2$ are sufficiently close to $1$ because of our assumption $\tilde{E}(y) \rightarrow A$. 

\end{document}